\newcommand{\bu}{{\mathbf u}}
\newcommand{\bv}{{\mathbf v}}
\newcommand{\be}{{\mathbf e}}
\newcommand{\bw}{{\mathbf w}}
\newcommand{\D}{{\cal D}}
\newcommand{\LL}{{\cal L}}
\newcommand{\R}{{\Bbb R}}
\newcommand{\sgn}{{\rm sign }}
\newcommand{\kron}{\otimes}
\newcommand{\tspan}{\mbox{span}}
\newcommand{\MATLAB}{{\small MATLAB }}
\newtheorem{definition}{Definition}
\newtheorem{algorithm}{Algorithm}
\newtheorem{theorem}{Theorem}
\newenvironment{proof}{\begin{list}{$\!\!${\bf Proof} \rule{1pt}{0pt}}
{\setlength{\leftmargin}{0pt}\setlength{\itemindent}{30pt}\setlength{\listparindent}
{15pt}}\item}{\rule{0.3em}{0mm}\hfill\framebox[1.2ex]{\rule{0.3em}{0mm}}
\end{list}}
\title{Accurate Inverses for Computing Eigenvalues of Extremely Ill-conditioned Matrices and Differential Operators
\footnote{
2010 Mathematics Subject Classification: 65F15, 65F35, 65N06, 65N25.
Key words: Eigenvalue; ill-conditioned matrix; accuracy; Lanczos Method; differential eigenvalue problem, biharmonic operator.}
}
\author{Qiang Ye\thanks{Department of Mathematics, University of Kentucky,
Lexington, KY 40506. {\tt qye3@uky.edu}. Research supported in part by NSF Grants DMS-1317424, DMS-1318633 and DMS-1620082.
}
}
\date{}
\begin{document}

\maketitle

\begin{abstract}
This paper is concerned with  computations of  a few smallest eigenvalues  (in absolute value) of a large extremely ill-conditioned matrix. It is shown that a few smallest eigenvalues can be accurately computed for a diagonally dominant matrix or  a product of diagonally dominant matrices by combining a standard iterative method with the accurate inversion algorithms that have been developed for such matrices. Applications to the finite difference discretization of   differential operators are discussed. In particular, a new discretization is derived for the 1-dimensional biharmonic operator that can be written as a product of diagonally dominant matrices. Numerical examples are presented to demonstrate the accuracy achieved by the new algorithms.
\end{abstract}

%
%\begin{keywords}
%Eigenvalue; Ill-conditioned matrix; Lancozos Method; differential eigenvalue problem, biharmonic operator.
%\end{keywords}
%
%\begin{AMS}
%65F15, 65F35, 65N06, 65N25
%\end{AMS}

\pagestyle{myheadings}
\thispagestyle{plain}

\section{Introduction}\label{sec:intro}
In this paper, we are concerned with accurate computations of  a few smallest eigenvalues  (in absolute value) of a large extremely ill-conditioned matrix. Here, we consider a matrix $A$ extremely ill-conditioned if $\bu \kappa_2 (A)$ is almost of order 1, where $\bu$ is the machine roundoff unit, $\kappa_2 (A):= \|A\| \|A^{-1}\|$ is the spectral condition number of $A$, and $\|\cdot\|$ is the 2-norm. We are mainly interested in large
sparse matrices arising in discretization of differential operators, which may lead  to extremely ill-conditioned matrices when a very fine discretization is used. In that case,  existing eigenvalue algorithms may compute a few smallest eigenvalues (the lower end of the spectrum) with little or no accuracy owing to roundoff errors in a floating point arithmetic.

Consider an $n\times n$  symmetric positive definite matrix $A$ and
let $0<\lambda_1 \le \lambda_2 \le \cdots \le \lambda_n$ be its eigenvalues.
Conventional dense matrix eigenvalue algorithms
(such as the QR algorithm)
are normwise backward stable, i.e., the
computed eigenvalues $\{\widehat \lambda_i \}$ in a floating point arithmetic
are the exact eigenvalues
of $A+E$ with $\|E\| = {\cal O} (\bu) \|A\|$;  see \cite[p.381]{gova}.
Here and throughout, %$\bu$ is the machine roundoff unit and
${\cal O} (\bu)$ denotes a term bounded %by $p(n)\bu$ for some polynomial $p(n)$.
by $p(n)\bu$ for some polynomial $p(n)$ in $n$.
Eigenvalues of large (sparse) matrices are typically computed by
an iterative method (such as the Lanczos algorithm), which
produces
an approximate eigenvalue $\widehat \lambda_i $ and an approximate eigenvector
$\widehat x_i $ whose residual satisfies $\|A \widehat x_i - \widehat \lambda_i \widehat x_i\|/\|\widehat x_i\| < \eta$ for some threshold $\eta$. Since the roundoff error occurring in computing $A \widehat x_i$ is of order $\bu \|A\| \|\widehat x_i\| $, then this residual can converge at best to
%${\|A \widehat x_i - \widehat \lambda_i \widehat x_i\| =
%\over \|A\| \|\widehat x_i\|} = {\cal O} (\bu)$.
${\cal O} (\bu) \|A\| $. % (see \cite[p.293]{parlett} for example).
Then, for both the dense and iterative eigenvalue algorithms, the error of the computed eigenvalue $\widehat\lambda_i$ is at best
$|\widehat\lambda_i - \lambda_i | \le {\cal O} (\bu) \|A\|= {\cal O} (\bu) \lambda_n$. Thus
\begin{equation}\label{eq:relerror}
\frac{|\widehat\lambda_i - \lambda_i |}{\lambda_i }
\le {\cal O} (\bu) \frac{\lambda_n }{ \lambda_i}
\approx
  \begin{cases}
{\cal O} (\bu)  & \text{if}\;\; \lambda_i\approx \lambda_n\\
{\cal O} (\bu) \kappa_2 (A) & \text{if}\;\; \lambda_i\approx \lambda_1.
  \end{cases}
\end{equation}
It follows that larger eigenvalues
(i.e. those $\lambda_i\approx \lambda_n$) can be computed with a relative error of  order $\bu$, but
for smaller eigenvalue (i.e. those $\lambda_i\approx \lambda_1$), we may expect a relative error of order ${\cal O} (\bu) \kappa_2 (A)$.
Hence, little   accuracy may be expected of these smaller eigenvalues  if the matrix  $A$ is extremely ill-conditioned. This is the case regardless of the magnitude of $\lambda_1$.
%We remark that, although this issue is stated in terms of absolute accuracy

%
%Furthermore, the accuracy of the approximate eigenvector $\widehat x_i$
%depends on the
%spectral gap $gap_i := \min_{j\ne i} |\lambda_j - \widehat \lambda_i|$ as well.
%Indeed, if $\theta$ is the acute angle between $\widehat x_i$ and $x_i$,
%then \cite[p.205]{demmel97}
%\begin{equation}\label{eq:eigenvector}
%\sin \theta \le {\|A \widehat x_i - \widehat \lambda_i \widehat x_i\|
%           / \|\widehat x_i\| \over gap_i }
%\le {{\cal O} (\bu) \|A\| \over gap_i }
% \approx {{\cal O} (\bu) \over relgap_i } \kappa_2 (A)
%\end{equation}
%where $relgap_i := \min_{j\ne i} {|\lambda_j - \widehat \lambda_i|
%\over |\lambda_i|}$
%is the relative spectral gap.
%So, with ${\cal O} (\bu) \kappa_2 (A)$ being the
%best relative accuracy for the computed eigenvalue,
%So, the best accuracy for the
%corresponding eigenvector is further reduced from ${\cal O} (\bu) \kappa_2 (A)$
%by the relative spectral gap.
%In other words, to approximate an eigenvector with any accuracy,
%the eigenvalue $\widehat\lambda_i$ should have a
%relative error a few smallest than $relgap_i$. Intuitively this separates
% $\widehat\lambda_i$ from
%the next closest eigenvalue.
%If the relative gap is $10^{-4}$, then the relative error of the computed
%eigenvalue should at least be a few smallest than that in order to separate the
%eigenvector.

Large matrices arising from applications are typically inherently ill-conditioned.
Consider  the eigenvalue problems for a differential operator $\LL$.
Discretization of $\LL$
leads to a large and sparse matrix eigenvalue problem.
Here it is usually  a few smallest eigenvalues that
are of interest and are well approximated by
the discretization.
Then, as the discretization mesh size $h$ decreases,
the condition number increases
and then the relative accuracy of these smallest eigenvalues
as computed by existing algorithms deteriorates.
%For a majority of problems, this deterioration in relative accuracy
%may be mild and harmless as it may be within the discretization error.
Specifically, the condition numbers  of finite difference discretization are typically of order
 ${\cal O} (h^{-2})$ for second order differential operators, but for fourth order  operators, it is
 of order ${\cal O} (h^{-4})$. This also holds true for other discretization methods;
see \cite{ben08,brms,fish,jiang14,zhxu14} and the references contained therein for some recent discussions on discretization of fourth order operators.
%However, for some important problems such as fourth order differential operators which
%arise in vibrational analysis of elastic plates or beams,
%even for modestly small mesh size,
%the condition number of its discretization increases very rapidly in the order ${\cal O} (h^{-4})$.
Thus, for a fourth order operator, little accuracy may be expected of the computed eigenvalues when $h$ is near $10^{-4}$ in the standard double precision (see numerical examples in \S 4).
%Some second order operator  have a condition number much a few smallest  than  ${\cal O} (h^{-2})$ and would encounter the same difficulty; see Example 1 in \S 4.

Indeed, computing eigenvalues and eigenvectors of
a biharmonic operator $\Delta^2$ has been a subject of much discussion;
see \cite{bare,bjtj97,brms,brow,ciar,chen,chen91,hack,rann}.
It has been
noted by Bjorstad and Tjostheim \cite{bjtj99} that several earlier
numerical results obtained in \cite{bare,chen91,hack} based on coarse discretization schemes
are inaccurate in the sense that they either
miss or misplace some known nodal lines for the first eigenfunction.
Indeed, to obtain more accurate numerical results that
agree with various known theoretical properties of eigenvalues and eigenfunctions,
they   had to use the  quadruple precision to compute the eigenvalues of the matrix obtained from the spectral Legendre-Galerkin method with up to $5000$ basis functions in \cite{bjtj99}. Clearly,  the existing matrix eigenvalue algorithms implemented in
the standard double precision  could not provide satisfactory accuracy at this resolution.

The computed accuracy of  smaller eigenvalues of a matrix has   been discussed extensively in
the context of the
dense matrix eigenvalue problems in the last two decades.
%They originate in the limitation
%that the eigenvalues are not determined to high relative accuracy by the
%entries of $A$ under perturbations (normwise or entrywise) \cite{higham,ipsen}.
Starting with a work by Demmel and Kahan \cite{demmelkahan}
on computing singular values of bidiagonal matrices, there is a large body of literature on when and how smaller eigenvalues or singular values can be computed accurately. Many special classes of
matrices have been identified
for which the singular values (or eigenvalues)
are determined and can be computed to high relative accuracy (i.e. removing
$\kappa_2 (A) $ in (\ref{eq:relerror})); see   \cite{barlowdemmel,ddy14a,ddy14b,demmel99a,demmel99b,demmelkoev,deve,doko,Dopico:09,fepa,li98} and the recent survey \cite{drma14} % references contained therein
for more details.
%In particular, our recent work on the diagonally dominant matrices computes an accurate $LDU$ factorization that can also be used to solve a linear system more accurately \cite{axy,ye09}.

In this paper, we study a new approach of using an iterative algorithm combined with the inverse (or the shift-and-invert transformation more generally)  to accurately compute a few smallest eigenvalues of a diagonally dominant matrix or one that admits factorization into a product of diagonally dominant matrices. A key observation is that those smallest eigenvalues can be accurately computed from the corresponding largest eigenvalues of the inverse matrix, provided the inverse operator can be computed {\em accurately}. In light of extreme ill-conditioning that we assume for the matrix, accurate inversion is generally not possible but, for diagonally dominant matrices, we can use the accurate $LDU$ factorization that we recently developed, with which the inverse (or linear systems) can be solved sufficiently accurately.
As applications, we will show that we can compute a few smallest eigenvalues accurately for most second order self-adjoint differential operators and some fourth order differential operators in spite of extreme ill-conditioning of the discretization matrix. Numerical examples will be presented to demonstrate the accuracy achieved.

We note that using the Lanczos algorithm with the shift-and-invert transformation is a standard  way for solving differential operator eigenvalue problems, where the lower end of the spectrum is sought but is clustered. The novelty  here is the use of the accurate $LDU$ factorization, that has about the same computational complexity as the Cholesky factorization but yields sufficiently accurate applications of the inverse operator.

The paper is organized as follows.  We discuss in \S 2 an accurate $LDU$ factorization algorithms for diagonally dominant matrices and then in \S 3, we show how they can be used to compute a few smallest eigenvalues accurately. We then apply this to discretization of differential operators together with various numerical examples in \S 4. We conclude with some remarks in \S 5.

Throughout, $\| \cdot \|$ denotes the 2-norm for vectors and matrices, unless  otherwise specified.
Inequalities and absolute value involving matrices and vectors are entrywise.  $\bu$ is the machine roundoff unit and ${\cal O} (\bu)$ denotes a term bounded by $p(n)\bu$ for some polynomial $p(n)$ in $n$. We use $fl(z)$ to denote the computed result of an algebraic expression $z$. ${\cal R} (M)$ denote the range space of a matrix $M$ and $\kron $ is the Kronecker product.

\section{Accurate $LDU$ Factorization of Diagonally Dominant Matrices}

Diagonally dominant matrices arise in many applications. With such a structure, it has been shown  recently that several linear algebra  problems can be solved much more accurately; see \cite{axy,axy2,ddy14a,ddy14b,ye08,ye09}.
In this section, we discuss related results on the $LDU$ factorization of diagonally dominant matrices, which will yield accurate solutions of linear systems and will be the key in our method to  compute a few smallest eigenvalues of extremely ill-conditioned matrices.

A key idea that makes more accurate algorithms possible is a representation (or re-parameterization) of diagonally dominant  matrices as follows.
\begin{definition}\label{def:matrix}
Given $M=(m_{ij})\in \R^{n\times n}$ with zero diagonals and
$v=(v_i)\in \R^n$, we use $\D(M, v)$ to denote the matrix
$A=(a_{ij})$ whose off-diagonal entries are the same as $M$
and whose $i$th diagonal entry is
$ v_i + \sum_{j\neq i}|m_{ij}| $, i.e.
\[
a_{ij} = m_{ij} \mbox{ for } i\ne j;
\mbox{ and } a_{ii} = v_i + \sum_{j\neq i}|m_{ij}|.
\]
\end{definition}

In this notation, for any matrix $A=[a_{ij}]$, let $A_D$ be the matrix
whose off-diagonal entries are the same as $A$ and whose diagonal
entries are zero and let $v_i =a_{ii} - \sum_{j\neq i}|a_{ij}| $
and $v=(v_1, v_2, \cdots,
v_n)^T$, then we can write $A=\D(A_D,v)$, which will be called
%\begin{equation}\label{eq:rep}
%A = \D (A_D,v),
%\mbox{ where } (A_D)_{ij} = \left\{
%  \begin{array}{cc}
%    a_{ij} & {\rm if }\; i\ne j \\
%    0 & {\rm if }\; i=j
%  \end{array} \right.
%\mbox{ and }
%(v)_i =a_{ii} - \sum_{j\neq i}|a_{ij}|.
%\end{equation}
the representation of $A$ by the diagonally dominant parts $v$.
Through this equation, we use $(A_D,v)$ as the data (parameters) to define
the matrix (operator) $A$.
The difference between using all the entries of $A$ and using $(A_D, v)$
lies in the fact that, under small
entrywise perturbations, $(A_D,v)$ determines all entries of $A$ to the same
relative accuracy, but not vice versa.
Namely, $(A_D,v)$ contains more
information than the entries of $A$ do.

%\begin{definition}\label{def:dd}
%A matrix $A=(a_{ij})$ is said to be diagonally dominant if
%$|a_{ii}| \ge \sum_{j\neq i}|a_{ij}| $ for all $i$.
%\end{definition}
In general, a matrix $A=(a_{ij})$ is said to be diagonally dominant if
$|a_{ii}| \ge \sum_{j\neq i}|a_{ij}| $ for all $i$.
Throughout this work, we consider a diagonally dominant $A$ with nonnegative diagonals i.e.
$v_i = a_{ii} - \sum_{j\neq i}|a_{ij}|\ge 0 $ for all $i$. Diagonally dominant matrices with some negative diagonals can be scaled by a negative sign in the corresponding rows to turn into one with nonnegative diagonals. The need for scaling in such cases clearly does not pose any difficulties for the problem of solving linear systems.

In \cite{ye08}, we have developed a variation of the Gaussian elimination
to compute
the $LDU$ factorization of
an $n\times n$ diagonally dominant matrix represented as  $A=\D(A_D,v)$ with  $v\ge 0$ such that
the diagonal matrix $D$ has entrywise relative accuracy
in the order of machine precision while $L$  and $U$  are well-conditioned with normwise accuracy (see Theorem \ref{thm:thm2} below). The algorithm is based on the observation that the Gaussian elimination can be carried out on $A_D$ and $v$, and the entries of $v$ can be computed with no subtraction operation, generalizing our earlier algorithm for diagonally dominant M-matrices \cite{axy,axy2} and the GTH algorithm \cite{gth}. For completeness, we present the algorithm and its roundoff error properties below.

\begin{algorithm}\label{alg:lu} (\cite{ye08})
$LDU$ {\sc factorization of } $\D (A_D, v)$
\begin{tabbing}
123\=123\= 1236\= 12\= 3333\=3333\=  \kill
\>1 \> Input: $A_D=[a_{ij}]$ and  $v=[v_i]\ge 0$; \\
\>2 \> Initialize: $P=I$, $L=I$, $D=0$, $U=I$. \\
\>3 \> For $k=1:(n-1) $  \\
\>4 \>   \> For $i=k:n$ \\
\>5\>   \>  \>$a_{i i} =v_i + \sum_{j=k, j\ne i}^n | a_{i j} |$; \\
\>6 \>  \> End For\\
\>7 \>   \> If $\max_{i\ge k} a_{i i}=0$, stop; \\
\>8 \>   \> Choose a permutation $P_1$ for pivoting
            s.t. $A=P_1AP_1$ satisfies one of:\\
\>8a \>   \> \>a) if  diagonal pivoting: $a_{k k} = \max_{i\ge k} a_{i i}$;\\
\>8b \>   \> \>b) if   column diagonal dominance pivoting:
          $0\ne a_{k k} \ge \sum_{i=k+1}^n |a_{ik}|$;\\
\>9 \>   \>  $P=P_1P$; $L=P_1L P_1$; $U=P_1UP_1$; $d_k=a_{k k}$; \\
\>10 \>   \> For $i=(k+1):n$ \\
\>11 \>   \>  \>$l_{i k} =a_{i k} / a_{k k} $;
             $u_{k i}=a_{k i}/ a_{k k}$;  $a_{i k}=0$;\\
\>12 \>   \>  \>$v_i = v_{i} + |l_{i k}| v_{k}$; \\
\>13 \>    \>  \>For $j=(k+1):n$\\
\>14\>    \>  \>  \>$p= \sgn (a_{i j} - l_{i k} a_{k j} ) $; \\
\>15\>    \>  \>  \>$s= \sgn(a_{i j}) p $; \\
\>16\>    \>  \>  \>$t= - \sgn (l_{i k}  ) \sgn(a_{k j}) p $; \\
\>17\>    \>  \>  \>If $j = i$ \\
\>18\>    \>  \>  \>  \>$s=1$; $t=  \sgn (l_{i k}  ) \sgn(a_{k i}) $; \\
\>19\>    \>  \>  \>End if \\
\>20\>   \>  \> \>$v_i = v_{i} + (1-s)|a_{i j}|
                 + (1-t)|l_{i k}a_{k j} | $; \\
\>21\>    \>  \>  \>$a_{i j} =a_{i j} - l_{i k} a_{k j} $; \\
\>22\>    \>  \>End for\\
\>23 \>   \> End For\\
\>24\> End for \\
\>25\> $a_{n n} = v_n$; $d_n=a_{n n}$. \\ %; $L=I+L$; $U=I+U$. \\
\end{tabbing}
\end{algorithm}

In output, we have $PAP^T=LDU$.
%Note that we compute in the $i$-loop $a_{ii}$ directly as well,
%which can be used for diagonal pivoting, but $a_{kk}$ that is
%used for elimination is computed through $v_k$, which is stable but
%more expensive.
We have considered two possible pivoting strategies in line 8.
The column diagonal dominance pivoting ensures that $L$ is column diagonally dominant while $U$ is still row diagonally dominant. These theoretically guarantee that $L$ and $U$ are well-conditioned with
\begin{equation}\label{eq:condL}
\kappa_{\infty}(L):=\|L\|_{\infty}\|L^{-1}\|_{\infty} \le n^2 \;
\mbox{ and } \;
\kappa_{\infty}(U) \le 2 n;
\end{equation}
see \cite{pena04}. In practice, however, the diagonal pivoting (i.e. $a_{k k} = \max_{i\ge k} a_{i i}$ at line 8a) is usually sufficient to result in well-conditioned $L$ and $U$, but for the theoretical purpose, we assume that the column diagonal dominance pivoting will be used so that (\ref{eq:condL}) holds.

The following theorem characterizes  the accuracy achieved by Algorithm \ref{alg:lu}.

\begin{theorem}\label{thm:thm2}
%Assume that the diagonal permutation as determined by
%the pivoting strategy in a floating point arithmetic is applied to
%the matrix $A$ in advance.
Let $\widehat L=[\widehat{l}_{i k} ]$,
$\widehat D=diag\{ \widehat{d}_{ i} \}$ and $\widehat U=[\widehat{u}_{i k} ]$ be
the computed factors of $LDU$-factorization of $\D(A_D, v)$
by Algorithm \ref{alg:lu} and let $L=[{l}_{i k} ]$,
$D=diag\{ {d}_{ i} \}$ and $ U=[{u}_{i k} ]$ be the corresponding factors
computed exactly.
We have
\begin{eqnarray*}
\|\widehat{L} - L \|_\infty
& \le & \left( n \nu_{n-1} \bu +{\cal O} (\bu^2)\right) \|L \|_\infty ,\\
| \widehat{d}_i - d_i| & \le & \left( \xi_{n-1} \bu +{\cal O} (\bu^2) \right) d_i, \;\mbox{ for } 1\le i \le n,\\
\|\widehat{U} - U \|_\infty
& \le & \left( \nu_{n-1} \bu +{\cal O} (\bu^2)\right) \|U \|_\infty .
\end{eqnarray*}
where $\nu_{n-1} \le 14 n^3 $ and
$\xi_{n-1} \le 6 n^3.$
\end{theorem}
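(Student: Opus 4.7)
The plan is to carry out a step-by-step roundoff analysis of Algorithm \ref{alg:lu}, treating the off-diagonal factors $L, U$ and the diagonal $D$ by different arguments. The bound for $L$ and $U$ follows from a standard componentwise analysis of Gaussian elimination, while the bound for $D$ exploits the crucial structural feature of the algorithm: all operations updating the parameters $v_i$ (lines 5, 12, 20) are additions of nonnegative quantities, so no cancellation can occur in the $v_i$ computation, and the diagonal $d_k$ is read off from $v_k$ and the row absolute sum via line 5.

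For $L$ and $U$, a routine error analysis shows that the computed Schur complement entries satisfy $|\widehat{a}_{ij}^{(k+1)} - a_{ij}^{(k+1)}|$ bounded componentwise in the usual GE fashion, and the divisions at line 11 each contribute another factor of $(1+\bu)$. Combining these per-step estimates over $n-1$ sweeps and converting the componentwise bounds to normwise bounds via the well-conditioning estimates $\kappa_\infty(L) \le n^2$, $\kappa_\infty(U) \le 2n$ from (\ref{eq:condL}) produces the stated bounds with $\nu_{n-1} = O(n^3)$.

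The core of the argument is the bound on $D$. The plan is to establish, by induction on $k$, the componentwise invariant
\[
\widehat{v}_i^{(k)} + \sum_{j\ge k,\, j\ne i} |\widehat{a}_{ij}^{(k)}| = (1 + \eta_{i,k})\Bigl(v_i^{(k)} + \sum_{j\ge k,\, j\ne i} |a_{ij}^{(k)}|\Bigr),\quad |\eta_{i,k}| \le \xi_k \bu + {\cal O}(\bu^2),
\]
with $\xi_k$ growing only linearly in $k$ times $n^2$. In exact arithmetic, both sides equal $a_{ii}^{(k)}$; the key identity is that the sign-tracking quantities $s,t$ at lines 14--19 are chosen so that $(1-s)|a_{ij}| + (1-t)|l_{ik}a_{kj}|$ is exactly $|a_{ij}| + |l_{ik}a_{kj}| - |a_{ij} - l_{ik}a_{kj}|$, precisely the amount added to $v_i$ needed to preserve the identity $v_i + \sum_{j\ne i}|a_{ij}| = a_{ii}$ under the elimination step (including the special $j=i$ contribution at lines 17--19, which accounts for the piece $|l_{ik}a_{ki}| - l_{ik}a_{ki}$). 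In floating point, the $v_i$ updates at lines 12 and 20 are additions of nonnegative quantities and so each contributes at most a factor $(1+\bu)$ in relative error; the subtraction at line 21 may cause catastrophic cancellation in individual $\widehat{a}_{ij}$, but its error is absorbed into $\widehat{v}_i$ through line 20, so the \emph{sum} on the left of the invariant retains relative accuracy. Specializing $i=k$ and invoking line 5 (another $O(n\bu)$ relative error from the summation) gives $\widehat{d}_k = (1+\eta)d_k$ with the required bound.

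The main obstacle is the bookkeeping in the inductive step: one must enumerate the sign patterns of $a_{ij}$, $l_{ik}$, $a_{kj}$, verify in each that line 20 supplies exactly the right compensation for the cancellation at line 21, and then propagate the rounding errors from each addition, division, absolute value, and the final summation in line 5 through the invariant. The calculation is tedious but essentially linear: each per-step relative error is $O(n\bu)$, yielding $\xi_{n-1} \le 6n^3$ after $n-1$ sweeps, and the same per-step analysis combined with (\ref{eq:condL}) gives $\nu_{n-1} \le 14n^3$ for $L$ and $U$.
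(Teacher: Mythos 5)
The paper does not give a proof of Theorem~\ref{thm:thm2} at all: it cites \cite[Theorem 3]{ye08} for the original version, which carried \emph{exponential} constants $\nu_{n-1} \le 6\cdot 8^{n-1}-2$, $\xi_{n-1}\le 5\cdot 8^{n-1}-\tfrac52$, and \cite[Theorem 4]{doko11} for the improvement to the polynomial constants stated here. That bibliographic history already contradicts the structure of your argument, and the discrepancy points to the gap.

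What you sketch is a direct forward rounding-error analysis of Algorithm~\ref{alg:lu}: propagate the relative error in the aggregate $\widehat v_i^{(k)} + \sum_{j}|\widehat a_{ij}^{(k)}|$ through the elimination, using the fact that lines 12 and 20 only add nonnegative quantities. That is essentially the strategy of \cite{ye08}, and the subtraction-free update of $v_i$ is indeed the right mechanism underlying the relative accuracy of $D$. But when the induction is carried out carefully, the per-step error does \emph{not} accumulate additively. The accuracy of $\widehat l_{ik}=\widehat a_{ik}^{(k)}/\widehat a_{kk}^{(k)}$ depends on the error already accumulated in the off-diagonal $\widehat a_{ik}^{(k)}$, which has suffered cancellation at line 21 in earlier sweeps; this error is fed multiplicatively into $\widehat l_{ik}\widehat a_{kj}^{(k)}$ and hence into both the next Schur complement and the next $v_i$ update. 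The resulting recurrence has the form $\epsilon_{k+1}\le c\,\epsilon_k + O(\bu)$ with $c>1$, which is precisely why \cite{ye08} obtains constants growing like $8^{n-1}$. Your sentence ``the calculation is tedious but essentially linear: each per-step relative error is $O(n\bu)$'' is therefore not a bookkeeping omission — it asserts exactly the fact that fails on this route, and it is what the route must somehow establish.

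The polynomial bounds $\nu_{n-1}\le 14n^3$, $\xi_{n-1}\le 6n^3$ come from \cite{doko11} via a genuinely different two-step argument: (i) a structured \emph{perturbation theorem} for the $LDU$ factorization of matrices in the representation $\D(A_D,v)$, showing that small entrywise relative perturbations of the parameters $(A_D,v)$ produce only polynomially-conditioned changes in $L$, $D$, $U$; and (ii) a \emph{backward} error result showing that the factors output by Algorithm~\ref{alg:lu} are the exact $LDU$ factors of some nearby $\D(\widetilde A_D,\widetilde v)$ with $|\widetilde A_D - A_D|\le O(\bu)|A_D|$ and $|\widetilde v - v|\le O(\bu)\,v$. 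Chaining (ii) and (i) gives the stated forward bounds without ever confronting the geometric compounding that defeats the direct analysis. To make your proof go through you would need to either invoke that perturbation theorem or accept the exponential constants of \cite{ye08}; as written, the claimed $O(n^3)$ does not follow from the argument you give.
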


The above theorem was originally proved in \cite[Theorem 3]{ye08} with $\nu_{n-1} \le 6\cdot 8^{n-1} -2$ and
$\xi_{n-1} \le 5 \cdot 8^{n-1} -{5\over 2}$ but improved to the polynomial bound above in \cite[Theorem 4]{doko11}.
The bounds demonstrate that the computed $L$ and $U$ are normwise accurate and $D$ is entrywise accurate, regardless of the condition number of the matrix.
Since the permutation $P$ for pivoting does not involve any actual computations and roundoff errors, for the ease of presentation, we will assume from now on that $P=I$; that is the permutation has been applied to $A$.

The above accurate factorization was used in \cite{ye08} to accurately compute all singular values of $A$. Based on the Jacobi algorithm, the algorithm there is suitable for small matrices only. Here,  we will show that the accurate $LDU$ factorization can also be used to solve a linear system more accurately. Specifically, with the factorization $A=\widehat L \widehat D \widehat U$, we solve $Ax=b$ by the standard procedure:
\begin{equation}\label{eq:xhat}
\widehat L y =b; \; \; \widehat D z= y; \; \mbox{ and } \; \widehat U x = z,
\end{equation}
where the systems involving $\widehat L$ and $\widehat U$ are solved with forward and backward substitutions respectively, and the diagonal system is solved by $z_i=y_i/d_{ii}$.

The accuracy of the computed solution $\widehat x$ from (\ref{eq:xhat}) has been investigated by Dopico and Molera \cite{domo12} in the context of a rank revealing factorization. A factorization $A=XDY$ is said to be rank revealing if $X$ and $Y$ are well conditioned and $D$ is diagonal.
Let $\widehat X$, $\widehat D$, and $\widehat Y$ be the computed factors of a rank revealing factorization $ A=XDY$. We say it is an accurate rank revealing factorization of $A$ (see \cite{demmel99b}) if $\widehat X$ and $\widehat Y$ are normwise accurate and $\widehat D$ is entrywise accurate, i.e.,
\begin{equation}\label{eq:arrf}
\frac{\|\widehat X -X\|}{\|X\|} \le  \bu p(n); \;\;
\frac{\|\widehat Y -Y\|}{\|Y\|} \le  \bu p(n); \;\;
\;\; \mbox{ and } \;
|\widehat D-D|  \le  \bu p(n)|D|,
\end{equation}
where $p(n)$ is a polynomial in $n$. By (\ref{eq:condL}), the $LDU$ factorization defined by Algorithm \ref{alg:lu} is a rank revealing factorization. Furthermore, it follows from Theorem \ref{thm:thm2} that the computed factors by Algorithm \ref{alg:lu}  form an accurate rank revealing factorization. The following theorem describes the accuracy of the computed solution of $Ax=b$. In the theorem below, the norm $\|\cdot\|$ can be any matrix operator norm satisfying  $\|{\rm diag}\{d_i\}\|=\max_i |d_i|$.

\begin{theorem}\label{thm:dopico}
(\cite[Theorem 4.2]{domo12})
Let $\widehat X$, $\widehat D$, and $\widehat Y$ be the computed factors of a rank revealing factorization of $ A=XDY$  and assume that they satisfy (\ref{eq:arrf})
%\[
%\frac{\|\widehat X -X\|}{\|X\|} \le  \bu p(n); \;\;
%\frac{\|\widehat Y -Y\|}{\|Y\|} \le  \bu p(n); \;\;
%\;\; \mbox{ and } \;
%|\widehat D-D|  \le  \bu p(n)|D|,
%\]
where  $p(n)$ is a polynomial of $n$ and $ X, D, $ and $Y$ are the corresponding exact factors. Assume also that the systems $X s = b$ and $Y x = w$ are solved with a backward
stable algorithm that when applied to any linear system $Bz = c$,  computes a
solution $\hat z $ that satisfies $(B+\Delta B) \hat z = c$; with $\|\Delta B\| \le \bu q(n) \|B\|$
where $q(n)$ is a modestly growing function of n such that $q(n) \ge 4 \sqrt{2} /(1-12\bu). $ Let
$g(n) := p(n)+q(n)+\bu p(n)q(n).$ Then, if $\hat x$ is the computed solution of $Ax = b$ through solving
\[
\widehat X y =b; \; \; \widehat D z= y; \; \mbox{ and } \; \widehat Z x = z,
\]
and if $\bu g(n)\kappa (Y)<1$ and $\bu g(n) (2+\bu g(n))\kappa(X) <1,$ then
\begin{eqnarray*}
% \nonumber to remove numbering (before each equation)
 \frac{\|\widehat x -x\|}{\|x\|}  &\le&  \frac{\bu g(n) }{1-\bu g(n)\kappa (Y) } \left( \kappa (Y)+
   \frac{1+(2+\bu g(n)) \kappa(X) }{1-\bu g(n)(2+\bu g(n)) \kappa(X) }\frac{\|A^{-1}\| \|b\|}{\|x\|}  \right)\\
    &=&  \left( \bu g(n) + {\cal O}(\bu^2)\right) \max\{\kappa(X), \kappa(Y)\}  \frac{\|A^{-1}\| \|b\|}{\|x\|}
\end{eqnarray*}
\end{theorem}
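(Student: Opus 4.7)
The plan is to turn the computed solution $\widehat x$ into the \emph{exact} solution of a system of the form $(X+E_X)(D+E_D)(Y+E_Y)\widehat x = b$ for suitably small perturbations $E_X$, $E_D$, $E_Y$, and then to expand the three inverses by a first-order Neumann series so that $\widehat x - x$ decomposes cleanly into one contribution from each perturbation. The decisive point, and the main obstacle, is to arrange the bookkeeping so that $\kappa(D)$ (which is essentially $\kappa(A)$ and is huge) is absorbed away and only $\kappa(X)$ and $\kappa(Y)$ remain; this is made possible by the identity $D^{-1}X^{-1}b = Yx$, which lets one trade a factor $\|D^{-1}\|$ for a factor $\|Y\|$ whenever a middle inverse of $D$ threatens to appear.

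\medskip\noindent\textbf{Step 1 (backward reduction).} Applying the backward-stability hypothesis to the two nontrivial solves gives $(\widehat X+\Delta X)\widehat y = b$ and $(\widehat Y+\Delta Y)\widehat x = \widehat z$ with $\|\Delta X\|\le \bu q(n)\|\widehat X\|$ and $\|\Delta Y\|\le \bu q(n)\|\widehat Y\|$. The componentwise error of the diagonal step $\widehat z_i = fl(\widehat y_i/\widehat d_i)$ gives $(\widehat D+\Delta D)\widehat z = \widehat y$ with $\Delta D$ diagonal and $|\Delta D|\le \bu|\widehat D|$. Chaining the three identities yields $(\widehat X+\Delta X)(\widehat D+\Delta D)(\widehat Y+\Delta Y)\widehat x = b$.

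\medskip\noindent\textbf{Step 2 (merging solver and factor errors).} Set $X+E_X := \widehat X+\Delta X$, $D+E_D := \widehat D+\Delta D$, $Y+E_Y := \widehat Y+\Delta Y$. Combining (\ref{eq:arrf}) with the triangle inequalities $\|\widehat X\|\le (1+\bu p(n))\|X\|$ and $\|\widehat Y\|\le (1+\bu p(n))\|Y\|$, and using $q(n)\ge 1$, produces the uniform perturbation bounds $\|E_X\|\le \bu g(n)\|X\|$, $\|E_Y\|\le \bu g(n)\|Y\|$, and $|E_D|\le \bu g(n)|D|$, where $g(n)=p(n)+q(n)+\bu p(n)q(n)$ as in the statement.

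\medskip\noindent\textbf{Step 3 (Neumann expansion and the key cancellation).} Under the assumptions $\bu g(n)\kappa(Y)<1$ and $\bu g(n)(2+\bu g(n))\kappa(X)<1$ the Neumann series converges for each of the three inverses. Multiplying out and subtracting $x = Y^{-1}D^{-1}X^{-1}b$ yields, to leading order,
\[
\widehat x - x \;=\; -\,Y^{-1}E_Y\,x \;-\; Y^{-1}D^{-1}E_D D^{-1}X^{-1}b \;-\; A^{-1}E_X X^{-1}b \;+\; {\cal O}(\bu^2).
\]
The first summand is bounded by $\bu g(n)\kappa(Y)\|x\|$, and the third equals $A^{-1}E_X X^{-1}b$, giving $\bu g(n)\kappa(X)\|A^{-1}\|\|b\|$. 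The middle term is the subtle one: a naive bound would pull out $\|D^{-1}\|\|D\|=\kappa(D)$ and ruin everything, so instead I would substitute $D^{-1}X^{-1}b = Yx$, turning $D^{-1}E_D D^{-1}X^{-1}b$ into $(D^{-1}E_D)Yx$. Since $D^{-1}E_D$ is diagonal with entries of modulus at most $\bu g(n)$, its norm is at most $\bu g(n)$, so the middle summand is bounded by $\bu g(n)\kappa(Y)\|x\|$ with no dependence on $D$ at all. Dividing through by $\|x\|$ and using $\|A^{-1}\|\|b\|/\|x\|\ge 1$ to absorb the two $\kappa(Y)$-terms into the $\kappa(X)$-term delivers the simplified, leading-order form of the bound; keeping the Neumann denominators $1/(1-\bu g(n)\kappa(Y))$ and $1/(1-\bu g(n)(2+\bu g(n))\kappa(X))$ in place, rather than truncating at first order, recovers the explicit non-asymptotic form stated in the theorem.
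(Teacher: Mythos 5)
Your reconstruction captures the decisive idea correctly: after reducing the computed process to an exact solve of $(X+E_X)(D+E_D)(Y+E_Y)\widehat x=b$ with $\|E_X\|\le\bu g(n)\|X\|$, $\|E_Y\|\le\bu g(n)\|Y\|$, $|E_D|\le\bu g(n)|D|$, the only way the bound can avoid $\kappa(D)\approx\kappa(A)$ is to never detach $\|D\|$ from $\|D^{-1}\|$; bounding the diagonal matrix $D^{-1}E_D$ entrywise by $\bu g(n)$ and replacing $D^{-1}X^{-1}b$ by $Yx$ does exactly this, and it is the heart of the argument in \cite{domo12}. The three-term decomposition of $\widehat x-x$, the merging of the factor accuracy $\bu p(n)$ with the solver backward error $\bu q(n)$ into $g(n)$, and the resulting bound $2\bu g(n)\kappa(Y)\|x\|+\bu g(n)\kappa(X)\|A^{-1}\|\|b\|$ are all sound to leading order.

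Two remarks. First, a scoping point: the paper does not prove this theorem; it is quoted verbatim from Dopico--Molera (\cite[Thm.~4.2]{domo12}), so the comparison is against that source rather than against an internal proof. Their argument proceeds by propagating residuals through the three solves one at a time with exact (non-truncated) perturbation bounds at each stage, which is where the specific denominators $1-\bu g(n)\kappa(Y)$ and $1-\bu g(n)(2+\bu g(n))\kappa(X)$ and the numerator factor $1+(2+\bu g(n))\kappa(X)$ come from. Your Neumann-series route gives the same leading-order content, but the closing claim that ``keeping the Neumann denominators in place recovers the explicit non-asymptotic form'' is overstated: a straight truncation-with-remainder of the three Neumann series does not reproduce those exact coefficients without further bookkeeping, and your first-order constant ($2\kappa(Y)+\kappa(X)$) differs from theirs ($\kappa(Y)+1+2\kappa(X)$). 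This is harmless at the ${\cal O}(\bu g(n))\max\{\kappa(X),\kappa(Y)\}$ level the paper uses, but it means you have proved a bound of the same order, not the literal displayed inequality. Second, a small cosmetic note: the ``$\widehat Z x=z$'' in the statement is a typo for ``$\widehat Y x=z$,'' which you silently and correctly fixed.
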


Applying the above theorem  to $A=LDU$ with the infinity norm  and using the bounds on $\widehat L$, $\widehat D$ and $\widehat U$ in Theorem \ref{thm:thm2}, we have that the computed solution $\widehat x$ by (\ref{eq:xhat}) satisfies the bound above  with $p(n) =14 n^4$, $q(n) = n^2$. Hence,
further using (\ref{eq:condL}), we obtain
\begin{equation}\label{eq:xhatbound}
\frac{\|\widehat x -x\|_\infty}{\|x\|_\infty}  \le \left( \bu   n^2 g(n) + {\cal O}(\bu^2)\right)   \frac{\|A^{-1}\|_\infty \|b\|_\infty}{\|x\|_\infty}.
\end{equation}
where $ g(n) =14 n^6 +n^4$. We note that this  worst-case bound with a large coefficient $n^2 g(n)$ is derived for a dense matrix from combining the bounds for the $LDU$ factorization, for solving linear systems, and for the condition numbers, each of which is pessimistic. For a large sparse matrix that has ${\cal O} (n)$ nonzero entries, the coefficient can  be significantly reduced if we assume the $L$ and $U$ factors also have ${\cal O} (n)$ nonzero entries. In any case,  the bound can be expected to be too pessimistic to be useful for deriving a numerical bound in a practical setting.  The main interest of the bound is to demonstrate the independence of the error on any conditioning of the problem.

For the convenience of later uses, we rewrite (\ref{eq:xhatbound}) in the 2-norm as
\begin{equation}\label{eq:xhatbound2}
\|\widehat x -x\|\le {\cal O} ( \bu) \|A^{-1}\| \|b\|.
\end{equation}

\section{Smaller Eigenvalues of Extremely Ill-conditioned Matrices}
In this section, we discuss computations of a few smallest eigenvalues of an extremely ill-conditioned matrix $A$, namely we assume that $ \kappa (A)\bu$ is  of  order  1. For simplicity, we  consider a symmetric positive definite matrix $A$.
Many of the discussions are applicable to  nonsymmetric problems, although there may be additional difficulties associated with  sensitivity of individual eigenvalues caused by non-normality of the matrix.

Let   $0 < \lambda_1 \le \lambda_2 \le \cdots \le \lambda_n$ be the  eigenvalues of $A$ and suppose we are interested in computing the smallest eigenvalue $\lambda_1$.
%We can apply an iterative method such as the Lanczos algorithm directly to $A$ if $\lambda_1$ is well separated, or to $A^{-1}$ (or more generally a shift-and-invert transformation) if  $\lambda_1$ is clustered. We discuss the accuracy obtained by both approaches.
%
%If $\lambda_1$  is computed by applying an iterative method to $A$, we obtain an approximate eigenvalue-eigenvector pair $(\widehat\lambda_1, \hat{x}_1)$ (with $\|\hat{x}_1\| = 1$) whose residual satisfies certain termination criterion.
%%Since the Lanczos method uses $A$ to compute $Av$ for some unit vector $v$  and $fl (Av)= A v + e$ with $|e| \le n|A| |v| \bu + {\cal O}(\bu^2)$. Then $\|e\| \le {\cal O} (\bu) \|A\|$; namely order $\bu  \|A\|$ error will incur in the Lanczos recurrence. Therefore,
%Then, as discussed in the introduction, owing to the roundoff errors of order $\bu \|A\| \|v\|$ incurred  in computing $Av$ for any $v$, the approximation will satisfy  at best
%$
%\|A\hat{x}_1 - \widehat\lambda_1 \hat{x}_1\| \le {\cal O} (\bu)  \|A\|.
%$
%Then the relative error of $\widehat\lambda_1$ is bounded by  $\bu \kappa (A)$; see (\ref{eq:relerror}).
%So very little accuracy may be expected of $\widehat\lambda_1$ if $\bu \kappa (A) \approx 1$.
%This difficulty can also be seen from the relative error incurred in computing the Rayleigh quotient $\frac{\hat{x}_1^T A \hat{x}_1}{\hat{x}_1^T \hat{x}_1}$; see (\ref{eq:rqiA}) below.
For an  ill-conditioned matrix,
$\lambda_1$ is typically clustered; namely the relative spectral gap
\begin{equation}\label{eq:relgap}
\frac{\lambda_2 -\lambda_1}{\lambda_n - \lambda_2} = \frac{\lambda_2 -\lambda_1}{\lambda_1} \frac{1}{\lambda_n/\lambda_1-\lambda_2 / \lambda_1 } \approx \frac{\lambda_2 -\lambda_1}{\lambda_1} \frac{1}{ \kappa (A) }
\end{equation}
is  small unless $\lambda_2 \approx \lambda_n$.
Therefore, although one can  compute $\lambda_1$ using the Lanczos algorithm,   its speed of convergence is determined by the relative spectral gap, and then a direct application  to $A$ will result in slow or no convergence. An efficient way to deal with the clustering is to apply the Lanczos method  to the inverse $A^{-1}$, which has a much better relative spectral gap
$\frac{\lambda_1^{-1} -\lambda_2^{-1}}{\lambda_2^{-1} - \lambda_n^{-1}} >  \frac{\lambda_2 -\lambda_1}{\lambda_1} $.

Consider using the Lanczos method on  $A^{-1}$ or simply  inverse iteration and compute its largest eigenvalue $\mu_1=\lambda_1^{-1}$.  One may argue that, by (\ref{eq:relerror}), since $\mu_1$ is the largest eigenvalue, it can be computed accurately, from which $\lambda_1=\mu_1^{-1}$ can be recovered accurately. However, we can compute $\mu_1$ accurately only if $A^{-1}$ is explicitly available or the matrix-vector multiplication $A^{-1}v$ can be accurately computed, which is not the case if $A$ is ill-conditioned. Specifically, if $u =A^{-1}v$ is computed by factorizing $A$ and solving $A u =v$, then the computed solution $\widehat u$ is backward stable. That is  $(A+\Delta A) \widehat u = v$ for some $\Delta A$ satisfying $\|\Delta A\| \le {\cal O} (\bu) \|A\|$. Then
\begin{equation}\label{eq:errorvec}
\widehat u - u = -A^{-1} \Delta A \widehat u
 \; \mbox{ and } \;
\|\widehat u - u\| \le {\cal O} (\bu)\|A^{-1}\| \| A \| \|\widehat u\|.
\end{equation}
So, if $\bu \|A^{-1}\| \| A \| $ is of order 1, very little accuracy can be expected of the computed result  $\widehat u$. Then little accuracy can be expected of the computed eigenvalue as  its accuracy is clearly limited by that of the matrix operator $A^{-1}v$. We further illustrate this point by looking at how an approximate eigenvalue is computed.

Note that all iterative methods is based on constructing an approximate eigenvector $x_1$, from which an approximate eigenvalue is computed, essentially as the Rayleigh quotient\footnote{In the Lanczos algorithm, the Ritz value is computed as the eigenvalue of the projection matrix, which is still the Rayleigh quotient of the Ritz vector.} $\rho_1 := \frac{x_1^T A^{-1}x_1}{x_1^T x_1}$. Unfortunately, for extremely ill-conditioned $A$, this Rayleigh quotient can not be computed with much accuracy. Specifically, to compute $\rho_1$, we first need to compute $A^{-1}x_1$. If $\widehat u_1$ is the computed solution to $Au_1 = x_1$, the computed Rayleigh quotient satisfies
\begin{equation}\label{eq:hatrho1}
\hat \rho_1 = fl\left( \frac{x_1^T \widehat u_1}{x_1^T x_1}\right)= \frac{x_1^T \widehat u_1}{x_1^T x_1} + e
 \; \mbox{ with  } \;
 |e| \le {\cal O} (\bu)  \frac{|x_1|^T | \widehat u_1|}{x_1^T x_1} \le {\cal O} (\bu) \frac{\|\widehat u_1\|}{\|x_1\|}.
\end{equation}
Now, it follows from (\ref{eq:errorvec}) that
\[
|\hat \rho_1 - \rho_1 |=  \left|\frac{x_1^T (\widehat u_1-u_1) }{x_1^T x_1} +e \right|
\le {\cal O} (\bu) \kappa_2 (A) \frac{\|\widehat u_1\|}{\|x_1\|}.
\]
Using (\ref{eq:errorvec}) again and $\|u_1\| =\|A^{-1}x_1\| \le \mu_1 \|x_1\|$, we have
\[
\frac{\|\widehat u_1\|}{\|x_1\|} \le  \frac{\| u_1\|}{\|x_1\|} + \frac{\|\widehat u_1-u_1\|}{\|x_1\|} \le\mu_1+{\cal O} (\bu)\kappa_2 (A) \frac{\|\widehat u_1\|}{\|x_1\|}
\]
from which it follows that
\[
\frac{\|\widehat u_1\|}{\|x_1\|} \le  \frac{\mu_1}{1-{\cal O} (\bu)\kappa_2 (A) }.
\]
Therefore,
\[
|\hat \rho_1 - \rho_1 |\le  \frac{{\cal O} (\bu)\kappa_2 (A)}{1-{\cal O} (\bu)\kappa_2 (A) }  \mu_1 = {\cal O} (\bu)\kappa_2 (A) \mu_1.
\]
With $\rho_1 \le \mu_1$, the relative error of the computed Rayleigh quotient $\hat \rho_1$ is expected to be of order $ {\cal O} (\bu)\kappa_2 (A)$.
Note that this is independent of the algorithm that we use to obtain an approximate eigenvector $x_1$. Indeed, this is the case even when $x_1$ is an exact eigenvector. That is, even if we have the exact eigenvector, we are still not able to compute a corresponding eigenvalue with any accuracy if it is computed numerically through the  Rayleigh quotient.
%To see why, suppose we already have the eigenvector $x_1$ exactly (or a good approximation). To compute the corresponding eigenvalue, we need to compute $A^{-1}x_1$. If $\widehat u_1$ is the computed result, then we can compute approximate eigenvalue through the Rayleigh quotient as
%\[
%\lambda_1^{-1} = \frac{x_1^T A^{-1}x_1}{x_1^T x_1} \approx \frac{x_1^T \widehat u_1}{x_1^T x_1}
%\]
%Now, it follows from $\widehat u_1 - u_1 \sim  \bu \|A^{-1}\| \| A \| \|\widehat u_1\|$ that
%\[
%\frac{x_1^T \widehat u_1}{x_1^T x_1} = \lambda_1^{-1} + \frac{x_1^T (\widehat u_1-u_1) }{x_1^T x_1}
%\sim  \lambda_1^{-1} + \bu \|A^{-1}\| \| A \| \frac{\|\widehat u_1\|}{\|x_1\|}.
%\]

We point out that the same difficulty occurs if we compute approximate eigenvalue $\lambda_1$ directly using the Rayleigh quotient of $A$, i.e. using $\rho_1 = \frac{x_1^T Ax_1}{x_1^T x_1}$. In this case, as in (\ref{eq:hatrho1}), the computed Rayleigh quotient $\hat \rho_1$ satisfies
\[
\hat \rho_1 = fl\left( \frac{x_1^T \widehat y_1}{x_1^T x_1}\right)= \frac{x_1^T \widehat y_1}{x_1^T x_1} + e
 \; \mbox{ with  } \;
 |e| \le {\cal O} (\bu)  \frac{|x_1|^T | \widehat y_1|}{x_1^T x_1} \le {\cal O} (\bu) \frac{\|\widehat y_1\|}{\|x_1\|}
\]
%\[
%\rho_1 = \frac{x_1^T Ax_1}{x_1^T x_1} \approx \frac{x_1^T \widehat y_1}{x_1^T x_1}
%\]
where $\widehat y_1 = fl(Ax_1)$. Set $ y_1 =A x_1$.  Then $\widehat y_1 = fl(Ax_1) =y_1 +f$ with $|f| \le \frac{n \bu}{1-n \bu} |A| |x_1|$.  Hence, $\|\widehat y_1\| \le (1+{\cal O} (\bu))\|A\| \|x_1\|$ and $\|\widehat y_1-y_1\| \le {\cal O} (\bu)\|A\| \|x_1\|$. It follows that $|e| \le {\cal O} (\bu) \|A\|$ and, for $\rho_1 \approx \lambda_1$,
\begin{equation}\label{eq:rqiA}
\frac{|\hat \rho_1 - \rho_1 |}{\rho_1} = \frac{1}{\rho_1} \left|\frac{x_1^T (\widehat y_1-y_1) }{x_1^T x_1} +e \right|
\le  \frac{{\cal O} (\bu) \|A\|}{\rho_1} \approx {\cal O} (\bu) \|A^{-1}\| \| A \|.
\end{equation}
Again, the accuracy of $\hat \rho_1 $ is limited by the condition number. This also shows that, if we apply the Lanczos method directly to $A$, then the smallest Ritz eigenvalue  can not be accurately computed, even if the process converges.

The above discussions demonstrate why all existing methods will have difficulties computing  the smallest eigenvalue accurately. It also highlights that the roundoff errors encountered in computing $A^{-1}v$  are the root cause of the problem. This also readily suggests a remedy: compute $A^{-1}v$ more accurately. It turns out that a more accurate solution to $Au=v$, in the sense of (\ref{eq:xhatbound2}), is sufficient, and this can be done for  a diagonally dominant matrix or  a product of diagonally dominant matrices.

Let $A=\D (A_D, v)$ be a (possibly nonsymmetric) diagonally dominant matrix with $v \ge 0$. We apply the Lanczos method (or the Arnoldi method in the nonsymmetric case) to $A^{-1}$  or simply using   inverse iteration. Then at each iteration, we need to compute $u =A^{-1} v$ for some $v$. This is done by computing the accurate $LDU$ factorization of $A$ by Algorithm \ref{alg:lu} and then solve $Au=v$ by (\ref{eq:xhat}). The computed solution $\widehat u$ then satisfies (\ref{eq:xhatbound}), i.e.
\begin{equation}\label{eq:ubound}
\|\widehat u -u\| \le {\cal O} (\bu) \|A^{-1}\| \|v\|.
\end{equation}
This error depends on   $ \|A^{-1}\|$ but is as accurate as multiplying  the exact $A^{-1}$ with $v$.
Specifically, if we have $A^{-1}$ exactly, say $A^{-1}= B$, and we compute $A^{-1}v$ by computing $Bv$ in a floating point arithmetic, the error is bounded as
\[
\|fl(Bv)- Bv \| \le  \frac{n^2 \bu}{1-n\bu} \|B\| \|v\|=\frac{n^2 \bu}{1-n\bu} \|A^{-1}\| \|v\|.
\]
So, the error of $\widehat u$ is of the same order as that of $fl (Bv)$.

Therefore, applying the Lanczos algorithm to $A^{-1}$ by solving $Au=v$ using Algorithm \ref{alg:lu} and  (\ref{eq:xhat}) has essentially the same roundoff effects as applying it to $B$ in a floating point arithmetic.
%Namely, the convergence behavior will be similar to the one for $B$.
In that case,  $\mu_1 :=\lambda_1^{-1}$, being the largest eigenvalue of  $ B=A^{-1}$, is computed accurately (see (\ref{eq:relerror})).
In light of the earlier discussions  with respect to the Rayleigh quotient, we also show that the Rayleigh quotient $\rho_1 := \frac{x_1^T A^{-1}x_1}{x_1^T x_1}$ for an approximate eigenvector $x_1$  can be computed accurately. Specifically, if $\widehat u_1$ is the computed solution to $Au_1 = x_1$, then $\|\widehat u_1 -u_1\| \le {\cal O} (\bu) \|A^{-1}\| \|x_1\|$ by (\ref{eq:ubound}) and the computed Rayleigh quotient $\hat \rho_1 := fl\left( \frac{x_1^T \widehat u_1}{x_1^T x_1}\right)$ satisfies (\ref{eq:hatrho1}). Thus, the error is bounded as
\[
|\hat \rho_1 - \rho_1 |=  \left|\frac{x_1^T (\widehat u_1-u_1) }{x_1^T x_1} +e \right|
\le {\cal O} (\bu) \|A^{-1}\| ={\cal O} (\bu) \mu_1,
\]
where we have used
\[
|e| \le {\cal O} (\bu) \frac{\|\widehat u_1\|}{\|x_1\|} \le {\cal O} (\bu) \frac{\|\widehat u_1-u_1\|}{\|x_1\|}+
{\cal O} (\bu) \frac{\| u_1\|}{\|x_1\|} \le {\cal O} (\bu) \|A^{-1}\|.
\]
With $\rho_1\approx \mu_1$, thus,  $\rho_1$ is computed  with a relative error of order $\bu$.

%
%Such a process is capable to drive convergence of the residual $\|A\hat{x} - \widehat\lambda \hat{x}\|$ of an approximate eigenpair
%$(\hat{x}, \widehat\lambda)$ with $\|\hat{x}\| = 1$ be an approximate eigenvector-eigenvalue pair obtained. Since the Lanczos method uses $A$ to form $Av$ for some unit vector $v$  and $fl (Av)= A v + e$ with $|e| \le n|A| |v| \bu + {\cal O}(\bu^2)$. Then $\|e\| ={\cal O} (\bu) \|A\|$; namely order $\bu  \|A\|$ error will incur in the Lanczos recurrence. Therefore, the approximation $\hat{x}$ will satisfy  at best
%\[
%\|A\hat{x} - \widehat\lambda \hat{x}\| \sim \bu  \|A\|
%\]
%Then
%${|\widehat\lambda - \lambda|} \le \|A\hat{x} - \widehat\lambda \hat{x}\|
%\sim  \bu {\|A\| }$
%and hence if $\lambda =\lambda_1$, we have
%\[
%{|\widehat\lambda - \lambda|/| \lambda|}
%\sim   \bu  {\|A\| \over |\lambda|
%\sim  \kappa (A) \bu
%\]

Now, suppose a matrix $A$ is not diagonally dominant but can be written as a product of two or more diagonally dominant matrices, say $A=A_1 A_2$. Then we can factorize $A_1$ and $A_2$ using Algorithm \ref{alg:lu} and then solve $Au=v$ through $A_1 w= v$ and then $A_2 u  =w$ as in (\ref{eq:xhat}). Let $\widehat w$  and $\widehat u$ be the computed solutions to $A_1 w= v$ and   $A_2 u  =\widehat w$ respectively. Using (\ref{eq:xhatbound2}), they satisfy
\[
\|\widehat w -w\| \le {\cal O} (\bu) \|A_1^{-1}\| \|v\|
\]
and
\begin{eqnarray*}
\|\widehat u -u\| &\le &{\cal O} (\bu) \|A_2^{-1}\| \|\widehat w\| \\
&\le& {\cal O} (\bu) \|A_2^{-1}\| \| w\| + {\cal O} (\bu) \|A_2^{-1}\| \|\widehat w-w\| \\
&\le & {\cal O} (\bu) \|A_2^{-1}\| \|A_1^{-1}\| \|v\|.
\end{eqnarray*}
Thus, writing $\gamma =  \|A_1^{-1}\| \|A_2^{-1}\|/ \|A^{-1}\| $, we have
\begin{equation}\label{eq:uw}
\|\widehat u -u\|\le {\cal O} (\bu) \gamma \|A^{-1}\| \|v\|.
\end{equation}
So, as discussed above, the largest eigenvalue of $A^{-1}$ can be computed to an accuracy of order ${\cal O} (\bu) \gamma$, which should be satisfactory as long as $\gamma$ is not an extremely large constant.

In the next section, we present examples from differential operators to demonstrate that the approaches discussed in this section result in accurate computed eigenvalues.

\section{Eigenvalues of Differential Operators}

The eigenvalue problem for self-adjoint differential operator arises in
dynamical analysis of vibrating elastic structures such as
bars, beams, strings, membranes, plates,
and solids. It takes the form of ${\cal L} u = \lambda u$ in
${\cal O}mega$ with a suitable homogeneous boundary condition, where ${\cal L}$
is a differential operator and ${\cal O}mega$ is a bounded domain in
one, two, or three dimensions. For example, the natural vibrating
frequencies of a thin membrane stretched over a bounded domain ${\cal O}mega$
and fixed along the boundary $\partial {\cal O}mega$ is determined by
the eigenvalue problem for a second order differential operator \cite[p.653]{babu}
%${\cal L} u = - \nabla ({\bf c} (x,y) \nabla u )$
%with the boundary condition $u=0$ on $\partial {\cal O}mega$.
\begin{equation}\label{eq:2order}
- \nabla ({\bf c} (x,y) \nabla u (x,y)) = \lambda u (x,y) \;\;
\mbox{ in }
(x,y) \in
{\cal O}mega ,
%\;\; \mbox{ and } \; u=0 \;\; \mbox{ on }
%(x,y) \in
%\partial {\cal O}mega .
\end{equation}
with a homogeneous boundary condition
$ \alpha u + \beta {\partial u \over \partial n } = 0$ for
$(x,y) \in \partial {\cal O}mega$.
On the other hand, a vibrating plate on ${\cal O}mega$ %that is clamped on its edges $\partial {\cal O}mega$
is described by
one for a fourth order differential operator
\cite[p.16]{wein}
%${\cal L} u = \Delta^2 u$
%with the boundary condition $u= {\partial u \over \partial n } =0$ on $\partial {\cal O}mega$.
\begin{equation}\label{eq:4order}
\Delta^2 u (x,y) = \lambda u (x,y) \;\;
\mbox{ in }
(x,y) \in
{\cal O}mega,
%\;\; \mbox{ and } \; u= {\partial u \over \partial n } =0 \;\; \mbox{ on }
%(x,y) \in
%\partial {\cal O}mega .
\end{equation}
with the natural boundary condition $ u = \Delta u = 0$ (if the plate is simply supported at the boundary) or the Dirichlet boundary condition
$ u = {\partial u \over \partial n } = 0$  (if the plate is clamped at the boundary) for
$(x,y) \in \partial {\cal O}mega$.
The spectrum of such
a differential operator consists of an infinite sequence of eigenvalues with
an accumulation point at the infinity:
%$\lambda_1 \le \lambda_2 \le \cdots \cdots \uparrow +\infty .$
\begin{equation}\label{eq:seq}
\lambda_1 \le \lambda_2 \le \cdots \cdots \uparrow +\infty .
\end{equation}
When computing eigenvalues for practical problems,
it is usually lower frequencies, i.e. the eigenvalues at the left end of the spectrum,
such as $\lambda_1$, that are of interest.

Numerical solutions of the differential eigenvalue problems
 typically involve first discretizing the differential equations
and then solving a matrix eigenvalue problem. Then, it is usually a few smallest eigenvalues of the matrix that
well approximate the eigenvalues of the
differential operators and are of interest.
The finite difference discretization
leads to a standard eigenvalue problem $Ax=\lambda x$, while the
finite element methods or the spectral methods
result in a generalized eigenvalue problem
$Ax=\lambda Bx$. Although our discussions may be relevant to all
three discretization methods,
we shall consider the finite difference method here as they often give rise to the diagonal dominant structure that can  be utilized in this work.

Let $A_h$ be an $n\times n$ symmetric finite difference discretization of a self-adjoint positive definite
differential
operator ${\cal L} $ with $h$ being the meshsize and let
\begin{equation}\label{eq:seqh}
0< \lambda_{1, h} \le \lambda_{2, h} \le \cdots \le \lambda_{n,h}
\end{equation}
be its eigenvalues.
Under some conditions on the operator and the domain \cite{kell},
we have for a fixed $i$, $\lambda_{i, h}\rightarrow \lambda_i$
as $h\rightarrow 0$,
while the largest eigenvalue $\lambda_{n,h}$ approaches $\infty$.
% $\infty$ is an accumulation point of the spectrum
%of a differential operator (i.e. $\lambda_k \rightarrow +\infty$).
 Then, the condition number $\kappa_2 (A_h)$ of $A_h$
approaches the infinity as the meshsize decreases to $0$.
As discussed in the introduction (see (\ref{eq:relerror})),
if $\widehat \lambda_{1,h}$ is the computed smallest eigenvalue, its relative
error is expected to be of order $\kappa_2 (A_h) \bu$.
Hence, we have a situation that, as the meshsize decreases, the
discretization error $\lambda_{1,h}-\lambda_{1}$
decreases, but our ability to  accurately  compute $\lambda_{1,h}$  also
decreases. Indeed, the overall error
\[
|\widehat \lambda_{1,h}-\lambda_{1}| \le
|\widehat \lambda_{1,h} -\lambda_{1,h}|+|\lambda_{1,h}-\lambda_{1}|
\le \lambda_{1,h} \kappa_2 (A_h) {\cal O} (\bu) +|\lambda_{1,h}-\lambda_{1}|.
\]
will at some point be dominated by the computation  error $|\widehat \lambda_{1,h} -\lambda_{1,h}|$.
We illustrate with an example.

\smallskip

\noindent {\bf  Example 1:}
Consider the biharmonic eigenvalue problem $\Delta^2  u=\lambda u$ on
the unit square $[0, 1]^2$. With the
simply supported boundary condition $u= {\partial^2 u \over \partial n^2} =0$,
the eigenvalues are known explicitly  \cite{bare} and the smallest one
is $\lambda_1 =4 \pi^4$.
% $\lambda_{i,j} = (i^2+j^2)^2 \pi^4$ for $i, j \ge 1$.
The 13-point finite difference discretization with a meshsize
$h = 1/(N+1)$ leads to a block five-diagonal
matrix (see \cite{bare} or \cite[p.131]{migr}). It is easy
to check that the discretization matrix can be written as
%$A_h = \frac{1}{h^4} (T_N^2 \kron I + 2 T_N\kron T_N + I \kron T_N^2 )$
$A_h = \frac{1}{h^4} T_{N\times N}^2 $, where $T_{N\times N} =T_N  \kron I   + I \kron T_N$,
$T_N$ is the $N\times N$ tridiagonal matrix
with diagonals being $2$ and off-diagonals being $-1$, and $\kron $
is the Kronecker product. The   eigenvalues of $A_h$
are also known exactly and $\lambda_{1,h} =\frac{1}{h^4} 64 \sin^4 ( \pi h/2)$.

We compute the smallest eigenvalue of $A_h$ using the implicitly restarted
Lanczos algorithm
with shift-and-invert (i.e. using {\tt eigs(A,1,$\sigma$)} of \MATLAB with
$\sigma=0$ with its default termination criterion) and let $\widehat \lambda_{1,h}$ be the computed result. We
compare the discretization error $|\lambda_{1,h}-\lambda_{1}|$,
the computation error $|\widehat \lambda_{1,h} -\lambda_{1,h}|$
and the overall approximation error
$|\widehat \lambda_{1,h}-\lambda_{1}|$. We use increasingly fine mesh
(with $N=2^{k}-1$ for $k = 1,2,\cdots$)
%$N=2^2, 2^3, \cdots, 2^8$)
and plot the corresponding errors
against $k$ in Figure \ref{fig:e1} (left). We test the value of $k$ up to $k=8$ for the 2-D problem. To test larger values of $k$, we use the corresponding 1-D biharmonic problem
where $A_h=\frac{1}{h^4}T_N^2$, $\lambda_{1,h} =\frac{16}{h^4}  \sin^4 ( \pi h/2)$ and $\lambda_1=\pi^4$.
Here we use $k$ up to $k=15$ and   plot the result in Figure \ref{fig:e1} (right).

\begin{figure}[th]
\caption{\small { Example 1.} Left: 2-D problem;
Right: 1-D problem.
solid line: computation error;
dashed line: discretization error; +-line: overall error.
\label{fig:nsfex1}
}
\centerline{
\epsfig{figure=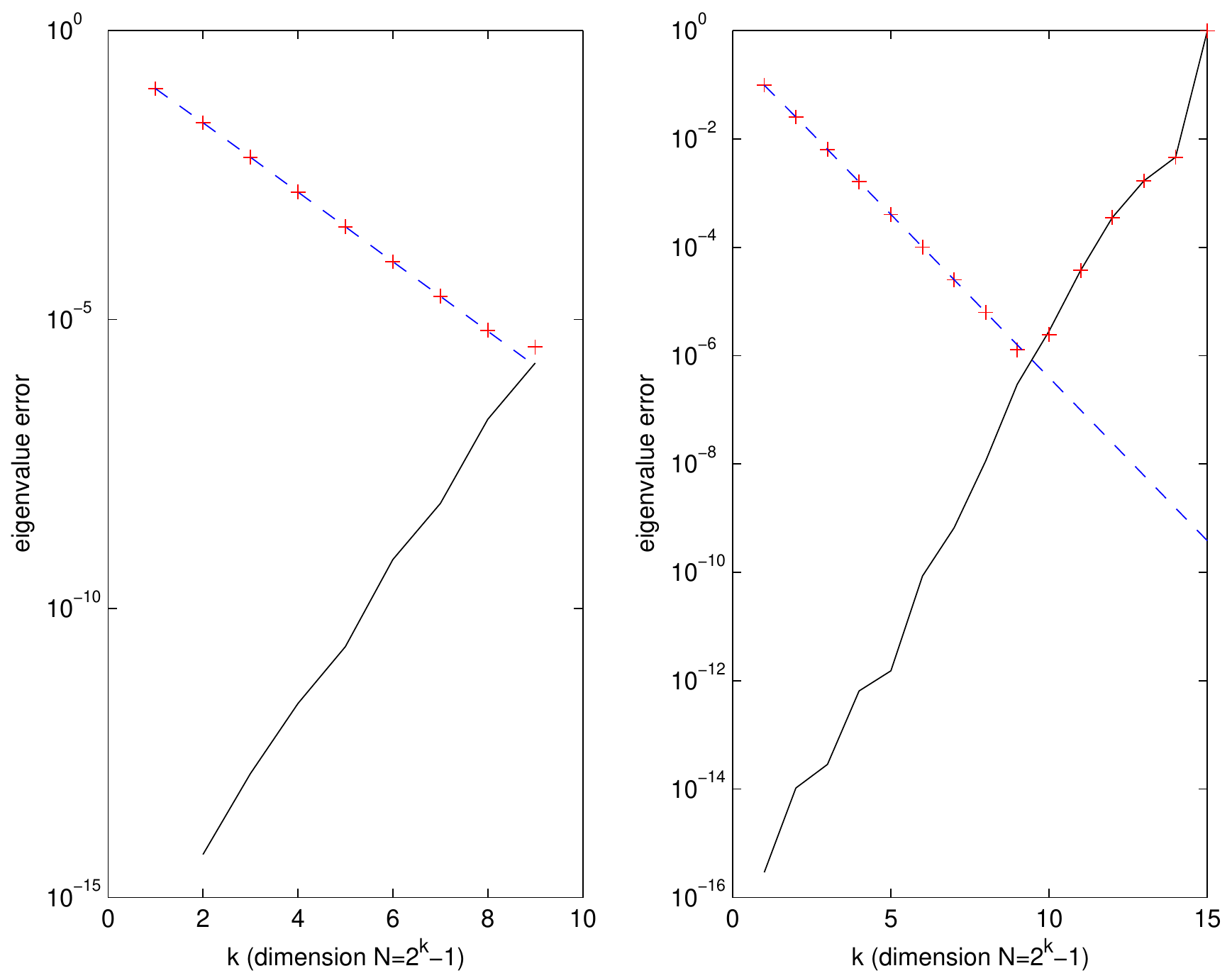,width=4.5in,height=3in}
}
\label{fig:e1}
\end{figure}

%\begin{figure}[h]
%\begin{center}
%\includegraphics[width=3in]{ex1.eps}
%\caption{Convergence history for the two algebraically smallest eigenvalues
%of $(A,B)$ for the single vector iteration (dashed line) and the
%BLEIGIFP (solid line) Left:  $\lambda_1$, Right:  $\lambda_2$.}
%\label{fig:singlevsblock}
%\end{center}
%\end{figure}

It is clear that as the meshsize decreases, the discretization error (dashed line)
decreases, while the computation error (solid line) increases.
At some point
($k=9$ in this case), the computation error dominates the discretization error
and the total error ($+$-line) increases from that point on  to ${\cal O} (1)$. So,
at some point, using
finer discretizations actually increases the final   error.

We note that the large error in the computed eigenvalue is not due to possible numerical complications of the Lanczos method we used to compute the eigenvalues of $A_h$. Indeed, other than the implicitly restarted Lanczos method ({\tt eigs}), we have also used simply   inverse iteration and the Lanczos algorithm with full reorthogonalization as combined with the Cholesky factorization for inverting $A$ to compute the smallest eigenvalue and obtained similar results.

We also note that for this particular problem, the eigenvalues can be computed as the square of the eigenvalues of $T_{N\times N}$ (or $T_N$ in the 1-dimensional case). This is not the case for a general operator involving weight functions or a different boundary condition. Here, we compute the eigenvalues directly from $A_h$ to illustrate the difficulty encountered by ill-conditioning.
%
%We also note that it is necessary to use the shift-and-invert transformation because the Lanczos algorithm applied directly to the matrix may converge too slowly to be practical for
%matrices arising from discretizations of differential operators. Indeed the speed of convergence of the Lanczos algorithm depends on the relative spectral gap
%\[
%\frac{\lambda_{2,h}-\lambda_{1,h}}{\lambda_{n,h}-\lambda_{2,h}} \approx
%\frac{\lambda_{2}}{\lambda_{n,h}}
%\]
%which is of order $\kappa(A_h)^{-1}$. Therefore, for even modestly small $h$,  the
%eigenvalues at the lower end of the spectrum have very small spectral gap so that,
%for instance, using {\tt eigs(A,1)} (i.e. the Lanczos for $A$) in Example 1 does not
%result in convergence.
%% are densely clustered in both
%%examples. The same is true if we use the implicitly restarted Lanczos
%%({\tt eigs} in \MATLAB).
%%On the other hand, the discretizations of differential operators are banded,
%%the shift-and-invert transform (or simply using inverse)
%%offers a  very efficient  approach overall.

\subsection{Second Order Differential Operators.}
A standard finite difference
discretization of second order differential operators such as (\ref{eq:2order}) is  the five-point scheme, which typically has a condition number of order ${\cal O} (h^{-2})$. Then
the relative error for
the smallest eigenvalue computed
is expected to be ${\cal O} (h^{-2}) \bu$. For modestly small
$h$, this relative error may be  smaller than the
discretization error and would not cause any problem.
%which is probably why the relative accuracy has not been recognized as
%an issue for large scale problems.
%Note that the matrix eigenvalues already contain a discretization error of order
%$h^{2}$.
%Still, if an extremely small $h$ is used, say, in the order of $10^{-8}$, then
%no relative accuracy can be guaranteed of the smallest eigenvalue computed.
However, there are situations where $\lambda_1$ may be 0 or very close to 0,
resulting in a condition number significantly greater than $h^{-2}$ and hence low accuracy
of the computed eigenvalue even when $h$ is modestly small.
This is the case if we have a periodic boundary condition or Neumann boundary condition. Then the smallest
eigenvalue may be computed with little accuracy (see Example 2 below). Note that for a zero eigenvalue, we need to consider the absolute error, which will be proportional to ${\cal O} (h^{-2}) \bu$.
%or if ${\cal L} $ contains very rough coefficients \cite{koev}.
%(i.e. with a highly oscillatory ${\bf c} (x,y)$),
%which models, for example, structures of composite materials
%with rapidly varying material properties \cite[p.765]{babu},
%then the condition number of the discretization matrix
%could be significantly larger than $h^{-2}$ due to a large constant in
%${\cal O} (h^{-2}) $, resulting low accuracy
%of the computed eigenvalue even for $h=10^{-4}$.
%see Example 1 below.

For the second order differential operators, fortunately, the finite difference discretization matrices
are typically diagonally dominant; see \cite[p.211]{varga}. Then, a few smallest eigenvalues can be computed accurately by applying  standard iterative methods to $A^{-1}$ as discussed in \S 3. We demonstrate this in the following numerical example.

In this and Examples 3 and 4 later, we have used both   the Lanczos algorithm with full
reorthogonalization for $A^{-1}$ and   inverse iteration and found the results to be similar. With the smallest eigenvalue well separated, both methods converge quickly but the Lanczos algorithm (without restart) typically requires fewer iterations. On the other hand,  inverse iteration sometimes improves the   residuals  by up to one order of magnitude.
We report the results obtained by  inverse iteration only.

\smallskip

\noindent {\bf Example 2:}
Consider the eigenvalue problem for ${\cal L} u =
-\Delta u + \rho(x,y) u$ on
the 2-d unit square $[0, 1]^2$ with the
periodic boundary condition $u(0,y)=u(1,y)$ and $u(x,0)=u(x,1)$. If $\rho (x,y)=0$,
the smallest eigenvalue of $\LL$ is $0$. If $\rho (x,y)\le \epsilon$ for some small $\epsilon$, then the smallest eigenvalue is at most $\epsilon$. In particular,
we test the case that $\rho(x,y)=\rho$ is a small constant in this example. Then
the smallest eigenvalue of ${\cal L}$ is exactly
$\rho$
 with $u=1$ being a corresponding eigenfunction.

The 5-point finite difference discretization with a meshsize
$h = 1/(N+1)$ leads to
$A_h = \frac{1}{h^2} (\widehat T_{N+1} \kron I + I \kron \widehat T_{N+1} ) + \rho I $
where  $\widehat T_{N+1}$ is  the $(N+1)\times (N+1)$  matrix that is the same as $T_{N+1}$ in Example 1 (i.e tridiagonal with 2 on the diagonal and $-1$ on the subdiagonals) except at the $(1,N+1)$ and $(N+1,1)$ entries which
 are $-1$.  % and the remianing entries are zero.
The smallest eigenvalue of $A_h$ is still $\rho$ while the largest is of order $h^{-2}$.

We compute $\lambda_{1,h}$ by applying   inverse iteration
%\footnote{We have also used the Lanczos algorithm with full
%reorthogonalization
%and obtained similar results. %So, the accuracy issue, if present, is not due to numerical complications of the Lanczos algorithm.
%}
with the inverse operator $A_h^{-1}x_k$ computed
 using the Cholsky factorization or  using
the accurate $LDL^T$ factorization (Algorithm \ref{alg:lu}) of $A_h$.
Denote the computed eigenvalues
by $\mu_1^{chol}$ and $\mu_1^{aldu} $ respectively
in Table \ref{tbl:ex1}, where the test is run with $h=2^{-k}$ with $k=3,4,\cdots,9$ and $\rho=1e-8$.
The termination criterion is
$\|A^{-1}x_k-\mu_k x_k\|/  \|x_k\|  \le n \bu |\mu_k|$.
%for which
%$\kappa_2 (T_n)\approx 10^9$ and $\kappa_2 (T_n)\approx 10^{15}$ respectively.

For this problem, $\rho$ is always the smallest eigenvalue of $A_h$. As $h$ decreases, the condition number of $A_h$ increases, which cause the relative error of the computed eigenvalue  $\mu_1^{chol}$ to increase steadily. On the other hand,  $\mu_1^{aldu} $ computed by the new approach using the accurate factorization  has a relative error in the order of machine precision $ \bu$ independent of $h$.

\begin{table}[hbt] \small
\begin{center}
\caption{\small {Example 2}: approximation of
$\lambda_1 =1e-8$ (
%$\lambda_{1,h} $ - exact eigenvalue of $T_n$;
$\mu_1^{chol}$ - computed eigenvalue by Cholesky;
$\mu_1^{aldu}$ - computed eigenvalue by Algorithm \ref{alg:lu}.)
}
\begin{tabular}{c|c|c|c|c}
$h$ & %$\lambda_{1,h} $ &
$\mu_1^{chol}$ & $\frac{|\lambda_{1}-\mu_1^{chol}|}{\lambda_{1}}$
& $\mu_1^{aldu}$ & $\frac{|\lambda_{1}-\mu_1^{aldu}|}{\lambda_{1}}$ \\
\hline
 1.3e-1  	 & 	  1.00000132998311550e-8  	 & 	  1.3e-6  	 & 	 9.99999999999999860e-9  	 & 	  1.7e-16\\
 6.3e-2  	 & 	  9.99988941000340750e-9  	 & 	  1.1e-5  	 & 	 1.00000000000000020e-8  	 & 	  1.7e-16\\
 3.1e-2  	 & 	  9.99983035717776210e-9  	 & 	  1.7e-5  	 & 	 1.00000000000000020e-8  	 & 	  1.7e-16\\
 1.6e-2  	 & 	  1.00008828596534290e-8  	 & 	  8.8e-5  	 & 	 1.00000000000000000e-8  	 & 	  0\\
 7.8e-3  	 & 	  9.99709923055774120e-9  	 & 	  2.9e-4  	 & 	 1.00000000000000000e-8  	 & 	  0\\
 3.9e-3  	 & 	  1.00122462366335050e-8  	 & 	  1.2e-3  	 & 	 1.00000000000000050e-8  	 & 	  5.0e-16\\
 2.0e-3  	 & 	  1.00148903549009850e-8  	 & 	  1.5e-3  	 & 	 1.00000000000000040e-8  	 & 	  3.3e-16
\end{tabular}

\label{tbl:ex1}
\end{center}
\end{table}
% below Lanczos version with (N+1)x(N+1)
% 1.3e-01  	 & 	  1.00000132998311550e-08  	 & 	  1.3e-06  	 & 	 1.00000000000000000e-08  	 & 	  0.0e+00
% 6.3e-02  	 & 	  9.99988941000339920e-09  	 & 	  1.1e-05  	 & 	 1.00000000000000040e-08  	 & 	  3.3e-16
% 3.1e-02  	 & 	  9.99983035717775540e-09  	 & 	  1.7e-05  	 & 	 9.99999999999999860e-09  	 & 	  1.7e-16
% 1.6e-02  	 & 	  1.00008828596534090e-08  	 & 	  8.8e-05  	 & 	 9.99999999999996710e-09  	 & 	  3.3e-15
% 7.8e-03  	 & 	  9.99709923055777100e-09  	 & 	  2.9e-04  	 & 	 9.99999999999996880e-09  	 & 	  3.1e-15
% 3.9e-03  	 & 	  1.00122462366335480e-08  	 & 	  1.2e-03  	 & 	 1.00000000000000430e-08  	 & 	  4.3e-15
% 2.0e-03  	 & 	  1.00148903549015150e-08  	 & 	  1.5e-03  	 & 	 9.99999999999986290e-09  	 & 	  1.4e-14

%\noindent {\bf 4.2. Discretizations of Fourth Order Differential Operators.}
\subsection{Biharmonic operator with the natural boundary condition}
A thin vibrating elastic  plate on ${\cal O}mega$ that is simply supported at the boundary $\partial {\cal O}mega$ is described by
the eigenvalue problem for the biharmonic operator (\ref{eq:4order}) with the natural boundary condition
\begin{equation}\label{eq:support}
u= \Delta u =0 \;\; \mbox{ for }
(x,y) \in
\partial {\cal O}mega .
\end{equation}
%More generally, if the plate is also stretched on the boundary, then the corresponding equation is
%\begin{equation}\label{eq:stretch}
%\Delta^2 u (x,y) - \rho \Delta  u (x,y)= \lambda u (x,y)
%\end{equation}
%where $\rho=\rho(x,y)$ is a prescribed force of stretch.

Discretization of such operators are not diagonally dominant. However,
the biharmonic operator   $\Delta^2 $ has a  form as a composition of two second order differential operators.
%Then, our approach is to use a suitable
%discretization $A_h$ of the fourth order operator
%that can be written as a product of two matrices that are discretizations of
%second order operators. Since discretizations of
%second order operators are typically diagonally dominant, we have a factorization
% $A_h=M_1 M_2$ where $M_1$ and $M_2$ are two
%diagonally dominant matrices. In that case, we can compute an accurate $LDU$ factorization for
%each of $M_1$  and $M_2$, and use them to solve $A_h u =v$ accurately. As discussed in \S 2, under a mild condition that $\|M_1 M_2  M_1 M_2$, the a few smallest eigenvalues of $A_h$ can be computed accurately. We discuss two cases.
%
%
Indeed, with the
simply supported boundary condition (\ref{eq:support}), the eigenvalue equation has the product form
\[
\left\{
  \begin{array}{ll}
    -\Delta u = v, & \hbox{with} \; u=0 \mbox{ for }
(x,y) \in
\partial {\cal O}mega \\
    -\Delta v  = \lambda u & \hbox{with}\; v=0 \mbox{ for }
(x,y) \in
\partial {\cal O}mega
  \end{array}
\right.\]
Then, the two Laplacian operators with the same boundary condition will have the same discretization matrix $A_h$. Thus the final discretization has the product form $B_h = A_h^2$. Since $A_h$ is typically diagonally dominant, $B_h$ is a product of two diagonally dominant matrices, and then as discussed in Section 3, its eigenvalue can also be computed accurately by using accurate factorizations of $A_h$. Note that, with respect to (\ref{eq:uw}), we have $\gamma = \|A_h\|\|A_h\|/\|B_h\|=1$.

For this problem, the eigenvalue can also be obtained by computing the eigenvalue of $A_h$ first and then squaring. With $A_h$ having more modest ill-conditioning, its eigenvalues can be computed more accurately. However, this approach does not generalize to  problems such as stretched plates with a nonuniform stretch factor $\rho(x,y)$  as described by $\Delta^2 u (x,y) - \rho \Delta  u (x,y)= \lambda u (x,y)$. %(\ref{eq:stretch}).
On the other hand, its discretization can still be expressed in a product form; that is $B_h =  ( A_h+ D)A_h $ where $D={\rm diag}\{\rho(x_i,y_j)\}$. Then if $\rho(x, y) \ge 0$, $A_h+D$ is also diagonally dominant. Thus the method described in \S 3 can compute  a few smallest eigenvalues of $B_h$ accurately.
%In this case, the eigenvalues of $B_h$ can not be obtained from $A_h$, unless $\rho(x,y)$ is a constant.
We illustrate this in the following numerical example. We use the 1-dimensional problem so as to test some really  fine meshes.

%
%
%Generally, if we have a factorization $A_h=M_1 M_2$, we can compute the accurate $LDL^T$
%factorizations for $M_1$ and $M_2$, say, $M_1=L_1 D_1 L_1^T$ and $M_2=L_2 D_2 L_2^T$.
%Hence with $A_h =L_1 D_1 L_1^T L_2 D_2 L_2^T$,
%we can compute $A_h^{-1}v_i$ accurately
%by solving four well-conditioned triangular systems and two diagonal systems.
%As in \S 4.1. this should be sufficient to
%compute the a few smallest eigenvalues of $A_h$ to high relative accuracy.
%We illustrate with an example.

\smallskip

\noindent {\bf Example 3:}
Consider computing the smallest eigenvalue of the eigenvalue problem:
$ {d^4 v \over dx^4} - \rho(x) {d^2 v \over dx^2} =\lambda v $ on $[0, 1]$
with the natural boundary condition $v(0)=v''(0)=v(1)=v''(1)=0$. This models vibration of beams; see \cite[p.15]{wein}. Discretization on a uniform mesh with $h=1/(N+1)$ leads to
\begin{equation}\label{eq:fact}
B_h u_N = \lambda h^4 u_N ; \;\; \mbox{ where }\;\; B_h =  T_{N}^2+h^2 D T_{N} =  (T_{N} +h^2 D )T_{N}
%= \frac{1}{h^4} (T_N^2 \kron I + 2 T_N\kron T_N + I \kron T_N^2 )
%= \frac{1}{h^4} (T_N \kron I + I \kron T_N)^2 .
\end{equation}
where $D={\rm diag}\{\rho(ih)\}$.
We test the case with a constant stretch factor $\rho=1$ so that $D=I$ and the eigenvalues of the differential operator  are exactly known. Namely, $\lambda_1 = \pi^4 +\pi^2 $.

We compute $\lambda_{1,h}$ by applying  inverse iteration\footnote{As mentioned before,
similar results are obtained by applying the Lanczos algorithm with full reorthogonalization to $B_h^{-1}$.}
to $B_h$ and we use the termination criterion $\|B_h^{-1}x_k-\mu_k x_k\|/  \|x_k\|  \le n \bu |\mu_k|$.
%To ensure that the roundoff errors are not related to possible complications of the Lanczos algorithm, we also run the inverse iteration and the final results are comparable. Indeed the results listed are based on the inverse iteration.
In applying $B_h^{-1}$, we compare the methods of  using
 the Cholesky factorization of $B_h$ and   using
the accurate $LDL^T$ factorization of  $T_{N}+h^2 D$ and $ T_{N}$ in (\ref{eq:fact}).
We denote the computed eigenvalues by $\mu_1^{chol}$ and $\mu_1^{aldu} $ respectively and present the results
in Table \ref{tbl:ex2}.

\begin{table}[hbt] \small
\begin{center}
\caption{\small {Example 3}: approximation of
$\lambda_1 =\pi^4 +\pi^2$ (
%$\lambda_{1,h} $ - exact eigenvalue of $T_n$;
$\mu_1^{chol}$ - computed eigenvalue by Cholesky;
$\mu_1^{aldu}$ - computed eigenvalue by Algorithm \ref{alg:lu}.)
}
\begin{tabular}{c|c|c|c|c}
$h$ &
% $\lambda_{1,h} $ &
$\mu_1^{chol}$ & $\frac{|\lambda_{1}-\mu_1^{chol}|}{\lambda_{1}}$
& $\mu_1^{aldu}$ & $\frac{|\lambda_{1}-\mu_1^{aldu}|}{\lambda_{1}}$ \\
\hline
%2.5e-01       &    3.70126414523863330e+02      &    -9.6e-02     &   3.70126414523863730e+02       &    -9.6e-02 \\
% 1.3e-01       &    3.99223760278519140e+02      &    -2.5e-02     &   3.99223760278522210e+02       &    -2.5e-02 \\
% 6.3e-02       &    4.06815845949081050e+02      &    -6.3e-03     &   4.06815845949059910e+02       &    -6.3e-03 \\
% 3.1e-02       &    4.08734271515214520e+02      &    -1.6e-03     &   4.08734271516142660e+02       &    -1.6e-03 \\
% 1.6e-02       &    4.09215161842006350e+02      &    -3.9e-04     &   4.09215161839702090e+02       &    -3.9e-04 \\
% 7.8e-03       &    4.09335464909589230e+02      &    -9.8e-05     &   4.09335464802467580e+02       &    -9.8e-05 \\
% 3.9e-03       &        4.09365548578527978e+02      &        -2.4e-05
% &       4.09365545569155358e+02       &        -2.4e-05 \\
% 2.0e-03       &        4.09372991415629144e+02      &        -6.3e-06
% &       4.09373066074985331e+02       &        -6.1e-06
% 2.5e-01    &    9.72178951319811660e+01    &    -9.4e-02       &   9.72178951319811800e+01     &    -9.4e-02\\
% 1.3e-01    &    1.04677649988907790e+02    &    -2.4e-02       &   1.04677649988908170e+02     &    -2.4e-02\\
% 6.3e-02    &    1.06622929704023580e+02    &    -6.1e-03       &   1.06622929704038010e+02     &    -6.1e-03\\
% 3.1e-02    &    1.07114407766550340e+02    &    -1.5e-03       &   1.07114407766706080e+02     &    -1.5e-03\\
% 1.6e-02    &    1.07237601842521120e+02    &    -3.8e-04       &   1.07237601843539380e+02     &    -3.8e-04\\
 7.8e-3      &    1.07268420705487220e2      &    9.6e-5     &   1.07268420682006790e2       &    9.6e-5\\
 3.9e-3      &    1.07276126605835940e2      &    2.4e-5     &   1.07276126662112130e2       &    2.4e-5\\
 2.0e-3      &    1.07278050199309870e2      &    6.0e-6     &   1.07278053236552470e2       &    6.0e-6\\
 9.8e-4      &    1.07278541687712870e2      &    1.4e-6     &   1.07278534885126120e2       &    1.5e-6\\
 4.9e-4      &    1.07278608040777410e2      &    8.1e-7     &   1.07278655297579520e2       &    3.7e-7\\
 2.4e-4      &    1.07276683387428220e2      &    1.9e-5     &   1.07278685400712600e2       &    9.4e-8\\
 1.2e-4      &    1.07255355496649590e2      &    2.2e-4     &   1.07278692926496390e2       &    2.3e-8\\
 6.1e-5      &    1.10218434482696150e2      &    2.7e-2     &   1.07278694807942740e2       &    5.8e-9\\
 3.1e-5      &    1.16309014323327090e2      &    8.4e-2     &   1.07278695278302880e2       &    1.5e-9\\
 1.5e-5      &    2.50710434033222100e2      &    1.3e0      &   1.07278695395894270e2       &    3.7e-10

\end{tabular}

\label{tbl:ex2}
\end{center}
\end{table}

From the table, the error of $\mu_1^{chol}$ decreases quadratically  for $h$ up to $h\approx 5e-4$, at which point further decrease of $h$ actually increases the error. When $h\approx 1e-5$, there is no accuracy left in the computed result $\mu_1^{chol}$. On the other hand,  the error for  $\mu_1^{aldu}$ continues to decrease quadratically  and maintains an accuracy  to the order of machine precision.

\subsection{Biharmonic operator with the Dirichlet boundary condition}
A thin vibrating   plate on ${\cal O}mega$ that is clamped at the boundary $\partial {\cal O}mega$ is described by
the eigenvalue problem for the biharmonic operator (\ref{eq:4order}) with the Dirichlet boundary condition
\begin{equation}\label{eq:clamp}
u= {\partial u \over \partial n } =0 \;\; \mbox{ on }
(x,y) \in
\partial {\cal O}mega .
\end{equation}
With this boundary condition,
the standard 13-point discretization does not  have   a product form; see \cite{ehrl71}. For the   1-dimensional case, it turns out that we can derive a suitable discretization at the boundary so that the resulting matrix is a product of two  diagonally dominant matrices. Here we present the details of this scheme.

We consider one dimensional fourth order bi-harmonic operator
\begin{equation}\label{bihar1d}
{d^4 v \over dx^4}  =\lambda v \;\;\mbox{ on }\;\;[0, 1]
\end{equation}
with  the Dirichlet boundary condition
\begin{equation}\label{bihar1dbd}
v(0)=v'(0)=v(1)=v'(1)=0
\end{equation}
We discretize the equation on the uniform mesh $0=x_0 \le x_1 \le \cdots \le x_N \le x_{N+1}=1$ with $x_i = ih$ and $h=1/(N+1)$. Let $\bv :=[ v(x_1),    v(x_2), \cdots,  v(x_N)]^T$.
A standard discretization  corresponding to the 13-point scheme  \cite{ehrl71} uses the second order center difference  for $ {d^2  \over dx^2}$ and  $v(-h) = v(h)+{\cal O} (h^3)$ for  the boundary condition $v'(0)=0$ with a similar one for $v'(1)=0$. This results in $A_h \bv = \lambda \bv + e$, where $\lambda$ is the eigenvalue of the operator (\ref{bihar1d}) and
\begin{equation}\label{bihar1dstandard}
A_h =\frac{1}{h^4}
   \left( \begin{array}{rrrrrrrr}
        7 & -4 & 1 &   &   &     &   & \\
        -4 & 6 & -4& 1 &   &     &   & \\
        1 & -4 & 6 & -4 & 1 &     &   & \\
          & 1 & -4 & 6 & -4 & 1 &     & \\
          &   & \ddots  & \ddots  & \ddots  & \ddots  &  \ddots &     \\
          &   &   &  1 &  -4 & 6  &  -4 & 1    \\
          &   &   &   &  1 &  -4 & 6  &  -4   \\
          &   &   &   &   &  1 &  -4 &  7
      \end{array} \right),\;\;
\be =
\left(
  \begin{array}{c}
    {\cal O} (h^{-1})\\ {\cal O} (h^2)\\ \vdots\\ {\cal O} (h^2)\\ {\cal O} (h^{-1})
  \end{array}
\right).
\end{equation}
We note that the local truncation error $e$ is of order ${\cal O} (h^{-1})$ only at the two boundary points, but it is shown in \cite{bram71,kurk} that this is sufficient to get a second order convergence. Namely, there is some eigenvalue of $A_h$ such that $|\lambda_h - \lambda| \le C h^2$ where $C$ is a constant dependent on the eigenvector $v$.

The standard discretization $A_h$ has a condition number of order $h^{-4}$. It is not diagonally dominant and it does not appear to have a factorization as a product of diagonally dominant matrices. Thus, direct computations of a few smallest eigenvalue of $A_h$ will have low accuracy when $h$ becomes small; see Example 4.

To be able to accurately compute a few smallest eigenvalues, we now derive a discretization that is a product of diagonally dominant matrices. This is done by exploring the product form $ {d^2  \over dx^2} {d^2  \over dx^2}$ of the operator.
Let $w= - {d^2 v \over dx^2}$ and  $u= - {d^2 w \over dx^2} ={d^4 v \over dx^4} $. Let
\[
\bv =
\left(
  \begin{array}{c}
    v(x_1) \\
    v(x_2) \\
    \vdots \\
    v(x_N) \\
  \end{array}
\right),
\;\;
\bw =
\left(
  \begin{array}{c}
    w(x_0) \\
    w(x_1) \\
    %w(x_2) \\
    \vdots \\
    w(x_N) \\
    w(x_{N+1})
  \end{array}
\right),\;
\bu =
\left(
  \begin{array}{c}
    u(x_1) \\
    u(x_2) \\
    \vdots \\
    u(x_N) \\
  \end{array}
\right)
\mbox{ and }
\hat \bu =
\left(
  \begin{array}{c}
      u(x_0) \\
    u(x_1) \\
    \vdots \\
    u(x_N) \\
    u(x_{N+1})
  \end{array}
\right).
\]
Then,  we have, for $1\le i \le N$,
\begin{eqnarray*}
% \nonumber to remove numbering (before each equation)
w(x_i)  &=& \frac{-v(x_{i-1}) +2 v(x_i) - v(x_{i+1})}{h^2}  +\frac{1}{12} v^{(4)}(x_i) h^2+{\cal O} (h^4) \\
    &=& \frac{-v(x_{i-1}) +2 v(x_i) - v(x_{i+1})}{h^2}  +\frac{1}{12} u(x_i) h^2+{\cal O} (h^4)
\end{eqnarray*}
and
\begin{eqnarray*}
% \nonumber to remove numbering (before each equation)
u(x_i)  &=& \frac{-w(x_{i-1}) +2 w(x_i) - w(x_{i+1})}{h^2}  +\frac{1}{12} w^{(4)}(x_i) h^2+{\cal O} (h^4) \\
    &=& \frac{-w(x_{i-1}) +2 w(x_i) - w(x_{i+1})}{h^2}  +\frac{1}{12} u''(x_i) h^2+{\cal O} (h^4).
\end{eqnarray*}
Among many possible discretizations of the boundary condition $v'(x_0)=v'(x_{N+1})=0$, we use the following scheme to maintain the product form of the operator:
\[
w(x_0) = \frac{2 v(x_{1}) -  v(x_2) }{h^2} +{\cal O} (h)
\mbox{ and }
w(x_{N+1}) = \frac{2 v(x_{N}) -  v(x_{N-1})}{h^2} +{\cal O} (h),
\]
which is derived, in the case of the left end, by expanding $v(x_{1})$, $v(x_2)$ about $x_0$ using  $v(x_0) =v'(x_0)=0$.
Then,
\[
\bw = \frac{1}{h^2} \widehat S_N \bv  +\frac{1}{12} \hat \bu h^2 +f
\]
with $f=[{\cal O} (h), {\cal O} (h^4), \cdots, {\cal O} (h^4), {\cal O} (h)]^T$,  and
\begin{equation}\label{uTn}
\bu = \frac{1}{h^2} \widehat T_N \bw  +{\cal O} (h^2)
\end{equation}
where
\[
\widehat S_N =
\left(
  \begin{array}{ccccc}
    2 & -1 &    &  &        \\
    \hline \\
      &   &   T_N &    & \\
      \\
 \hline
      &   &     & -1 & 2 \\
  \end{array}
\right), \;\;
T_N =\left( \begin{array}{rrrrr}
     2 & -1 &  { }  &{ }    & { }            \\
     -1 & 2 & -1 & { }   & { }           \\
    { }    & \ddots & \ddots & \ddots        & { }      \\
         & { }      & -1   & 2 & -1  \\
      & & { }      & -1   & 2
     \end{array} \right)
\]
are respectively  $(N+2)\times N$ and $N\times N$ matrices and
\[
\widehat T_N =\left( \begin{array}{cccccc}
   -1 &  2 & -1 &  { }  &{ }    & { }            \\
     & -1 & 2 & -1 & { }   & { }           \\
     & { }    & \ddots & \ddots & \ddots        & { }      \\
%     & { }     & { }      & -1   & 2 & -1 & \\
     & { }     & { }      & -1   & 2 & -1
     \end{array} \right) \in \R^{N\times (N+2)}
\]
Note that by applying (\ref{uTn}) to $\hat \bu$, we have $\frac{1}{  h^2} \widehat T_N \hat \bu = \bv^{(6)} +{\cal O} (h^2)$ where $\bv^{(6)} = [v^{(6)} (x_i)]_{i=1}^N$.  Hence,
\begin{eqnarray*}
\bu  % & = &  \frac{1}{h^2} \widehat T_N \bw  +{\cal O} (h^2)\\
  & = & \frac{1}{h^4} \widehat T_N \widehat S_N \bv +\frac{1}{12 h^2}  \widehat T_N \hat \bu h^2  +\frac{1}{12 h^2}  \widehat T_N f +{\cal O} (h^2) \\
& = & \frac{1}{h^4} \widehat T_N \widehat S_N \bv   +f_1
\end{eqnarray*}
with
\begin{equation}\label{trerror}
f_1:=\frac{1}{12}  \widehat T_N \hat \bu +\frac{1}{12 h^2}  \widehat T_N f +{\cal O} (h^2)=[{\cal O} (h^{-1}), {\cal O} (h^2), \cdots, {\cal O} (h^2), {\cal O} (h^{-1})]^T,
\end{equation}
where we note that $ \widehat T_N \hat \bu = \bv^{(6)}h^2 +{\cal O} (h^4)$ and $\widehat T_N f =[{\cal O} (h), {\cal O} (h^4), \cdots, {\cal O} (h^4), {\cal O} (h)]^T$.
Thus,
\begin{equation}\label{eq:fdeig}
\frac{1}{h^4} \widehat T_N \widehat S_N \bv   +f_1 = \lambda \bv
\end{equation}
Omitting the $f_1 $ term, we obtain the following discretization
\begin{equation}\label{eq:diseig}
\widehat T_N \widehat S_N \bv_h = \lambda_h h^4 \bv_h.
\end{equation}
The so derived discretization is in a factorized form, but $\widehat T_N$ and $\widehat S_N$ are not square matrices. However, they can be reduced to a product of square matrices as follows.
\begin{eqnarray}
B_h := \widehat T_N \widehat S_N
& = & T_N^2 -
\left(
  \begin{array}{ccccc}
    2 & -1 &   &   &     \\
    \hline \\
      &   &    0 &    & \\
      \\
 \hline
      &   &   & -1 & 2 \\
  \end{array}
\right) \nonumber \\
  & = & T_N^2 - E_N T_N = S_N  T_N  \label{eq:bh}
\end{eqnarray}
where
\[
%\;\mbox{ where }
E_N =  \left(
  \begin{array}{ccccc}
    1 & 0 &    &      & \\
    \hline \\
      &   &    0 &   &  \\
      \\
 \hline
      &    &   & 0 & 1 \\
  \end{array}
\right)
\mbox{ and }
S_N =T_N -E_N =\left( \begin{array}{rrrrr}
     1 & -1 &     &{ }    & { }            \\
     -1 & 2 & -1 & { }   & { }           \\
      { }    & \ddots & \ddots & \ddots        & { }      \\
      { }     & { }      & -1   & 2 & -1 \\
      { }     & { }   &    & -1   & 1
     \end{array} \right)
\]
are both  $N\times N$ matrices. Now, both  $S_N$ and $T_N$ are diagonally dominant and can be factorized accurately using Algorithm \ref{alg:lu}.

Note that the local truncation error $f_1$ of (\ref{trerror}) for this discretization is of order $h^2$ except at the boundary points where it is of order $h^{-1}$. By \cite{bram71,kurk}, this is sufficient to imply a second order approximation of eigenvalues, namely, for any eigenvalue of the differential operator\footnote{This may appears impossible with only a finite number of eigenvalues of a discretization matrix but as explained by Keller \cite{kell}, the constant $C$ in the bound depends on the smoothness of $v$ and for a higher eigenvalue, their corresponding eigenvector is highly oscillatory and thus $C$ is large enough that the bound $Ch^2$ is only meaningful for correspondingly small $h$.}, there is some eigenvalue of $B_h$ such that $|\lambda_h - \lambda| \le C h^2$ where $C$ is a constant dependent on the eigenvector $v$ (specifically, $v^{(6)}$). Namely, the finite difference method considers any operator eigenvalue $\lambda$ and approximates its associated equation (\ref{eq:fdeig}) by (\ref{eq:diseig}), from which {\em some} eigenvalue $\lambda_h$ of $B_h$ is found to approximate $\lambda$. It is not given that the pairing of $\lambda$ and $\lambda_h$ follows any  ordering of the eigenvalue sequences. (In contrast, projection  methods such as finite elements have each eigenvalue of the discretization, say the $i$-th smallest, approximating the $i$-th smallest  eigenvalue of the operator as $h\rightarrow 0$ as implied from the minimax theorem.)
%Note that the convergence theory of finite difference approximation only states that every eigenvalue of a differential operator can be approximated by some eigenvalue of the discretization for sufficiently small $h$.
Then, $B_h$ may have eigenvalues that do not approximate any eigenvalue  of the differential operator as $h\rightarrow 0$. Such eigenvalues are called spurious eigenvalues. In our particular discretization, it is easy to show that $0$ is always an eigenvalue of $B_h$ of multiplicity 1 (the next theorem) and is therefore a spurious eigenvalue as the differential operator does not posses any zero eigenvalue. Thus, in actual computations, we need to deflate the zero eigenvalue and  compute the smallest nonzero eigenvalues of $B_h$.

\begin{theorem}\label{thm:bh}
$B_h =  S_N  T_N $ is diagonalizable with real eigenvalues and $0$ is a simple eigenvalue with $e=[1, 1, \cdots, 1]^T$ as a left eigenvector and $v_0 = T_N^{-1}e$ as a right eigenvector. Furthermore, letting $P_0 =I - v_0 e^T / e^T v_0$ be the spectral projection associated with the nonzero eigenvalue, then $B_h\left|_{{\cal R} (P_0)}\right.$, the restriction of $B_h$ on ${\cal R} (P_0)$  (the spectral subspace complementary to $\tspan \{v_0\}$), is invertible with its eigenvalues being the nonzero eigenvalues of $B_h$.
\end{theorem}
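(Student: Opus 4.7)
My plan is to reduce everything to the symmetric structure of the two factors. Both $T_N$ and $S_N$ are symmetric; $T_N$ is symmetric positive definite (a standard fact about the Dirichlet finite-difference Laplacian); and $S_N$ is symmetric positive semidefinite with row sums all equal to zero, so $S_N e = 0$. I would first verify that the null space of $S_N$ is one-dimensional, spanned by $e$: writing out $S_N x = 0$ row by row, the first row gives $x_1 = x_2$, and by induction each subsequent row forces $x_{i+1} = x_i$, so $x$ is a multiple of $e$.

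Given these facts, I would obtain diagonalizability and the spectrum of $B_h$ via a symmetric similarity. Since $T_N$ is SPD, its square root $T_N^{1/2}$ is well defined, symmetric and invertible, and
\[
T_N^{1/2} B_h T_N^{-1/2} = T_N^{1/2} S_N T_N^{1/2},
\]
so $B_h$ is similar to the symmetric matrix $M := T_N^{1/2} S_N T_N^{1/2}$. Hence $B_h$ is diagonalizable with real (in fact nonnegative) eigenvalues. Moreover, $Mx = 0$ iff $S_N T_N^{1/2} x = 0$ iff $T_N^{1/2} x \in \mathrm{span}\{e\}$, which means $\dim \ker M = 1$. Thus $0$ is a simple eigenvalue of $B_h$.

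Next I would exhibit the eigenvectors by direct computation. Since $e^T S_N = 0$, we have $e^T B_h = e^T S_N T_N = 0$, so $e$ is a left eigenvector for eigenvalue $0$. Setting $v_0 = T_N^{-1} e$ gives $B_h v_0 = S_N T_N T_N^{-1} e = S_N e = 0$, so $v_0$ is a right eigenvector. Because $T_N^{-1}$ is SPD, $e^T v_0 = e^T T_N^{-1} e > 0$, so the rank-one matrix $v_0 e^T/(e^T v_0)$ is well defined and satisfies $(v_0 e^T/(e^T v_0))^2 = v_0 e^T/(e^T v_0)$. This is the spectral projector onto the $0$-eigenspace (its range is $\mathrm{span}\{v_0\}$ and its kernel is $\{x : e^T x = 0\}$, which is exactly the $B_h$-invariant complement guaranteed by diagonalizability).

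Finally, $P_0 = I - v_0 e^T/(e^T v_0)$ is the complementary spectral projector, so $\mathcal{R}(P_0) = \{x : e^T x = 0\}$ is $B_h$-invariant, and the restriction $B_h|_{\mathcal{R}(P_0)}$ has as its eigenvalues precisely the nonzero eigenvalues of $B_h$ (with the same multiplicities), hence is invertible. The only step that requires any real care is the simplicity assertion, which I would handle through the symmetric similarity $M$ as above; everything else is a direct verification.
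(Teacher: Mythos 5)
Your argument is correct and follows essentially the same route as the paper: both use the symmetric similarity to $T_N^{1/2} S_N T_N^{1/2}$ to obtain diagonalizability with real eigenvalues, reduce simplicity of the zero eigenvalue to $\mathrm{rank}(S_N)=N-1$, and verify the left and right null vectors directly. You simply fill in more of the elementary details (the induction showing $\ker S_N=\mathrm{span}\{e\}$, the positivity of $e^T v_0$, the identification of $P_0$ as the complementary spectral projector) that the paper leaves to the reader.
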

\begin{proof}
By writing $B_h =  S_N  T_N = T_N^{-1/2} (T_N^{1/2} S_N T_N^{1/2}) T_N^{1/2}$, $B_h$ is diagonalizable with real eigenvalues. It is straightforward to check that $B_h v_0 = 0$ and $e^T B_h=0$. It follows from ${\rm rank} (S_N)=N-1$ that $0$ is a simple eigenvalue of $B_h$. The invertibility of $B_h\left|_{{\cal R} (P_0)}\right. $ and its spectral decomposition follows from the eigenvalue decomposition of $B_h$.
% or decompositions of $x, y \in {\cal R} (P_0)$ as linear combinations of the eigenvectors corresponding to nonzero eigenvalues.
\end{proof}

{\sc Remark:} The invertibility of $B_h\left|_{{\cal R} (P_0)}\right.$ implies that $B_h x =y$ has a unique solution $x\in {\cal R} (P_0) $ for any $y \in {\cal R} (P_0)$. In particular, the eigenvalues of the inverse are the reciprocals of the non-zero eigenvalues of $B_h$.  Also, the unique solution to $B_h x =y$ can also be expressed as $x=B_h^D y\in {\cal R} (P_0) $ using the Drazin inverse $B_h^D$.

To compute a few smallest nonzero eigenvalue of $B_h$, we can use the deflation by restriction  to ${\cal R} (P_0)$. Namely, we compute the eigenvalues of  $B_h\left|_{{\cal R} (P_0)}\right. $. Indeed, to compute them accurately, we compute a few largest eigenvalues of the inverse of  $B_h\left|_{{\cal R} (P_0)}\right. $, which can be computed accurately  as follows.

First note that using the accurate $LDL^T$ factorization of $T_N$,  $v_0= T_N^{-1}e$ can be computed accurately. To apply  the inverse of $B_h\left|_{{\cal R} (P_0)}\right. $ on a vector $ y \in {\cal R} (P_0)$, we need to solve
\[
B_h x =y\;\;\mbox{ for }\;\; x, y \in {\cal R} (P_0).
\]
Since $S_N$ is also diagonally dominant, we first compute an accurate $LDL^T$ factorization $S_N = L_s D_s L_s^T$. Indeed, for this matrix, we have the factorization exactly with
% with $L_s$ the unit lower triangular matrix with $-1$ on the subdiagonal, i.e.
\begin{equation}\label{eq:ls}
L_s =\left( \begin{array}{rrrr}
     1 &     &     &{ }         \\
     -1 & 1 &  & { }          \\
      { }    & \ddots & \ddots &    \\
      { }     & { }      & -1   & 1
     \end{array} \right)
\;\;\mbox{ and }\;\;
D_s = {\rm diag}\{1, \cdots, 1, 0\}.
\end{equation}
Then  $L_s^T e = e_N$, which  also follows from $S_N e = 0$, where $e_N=[0, \cdots, 0, 1]^T$. Thus, for $y \in {\cal R} (P_0)$, we have $e_N^T (L_s^{-1} y) = e^T y = 0$; namely the last entry of $L_s^{-1} y$ is zero. Thus a particular solution to $B_h x =y$ that is not necessarily in ${\cal R} (P_0)$ can be obtained by solving $L_s x_1 =  y$, $D_s x_2 = x_1$ (by setting $x_2 (i) = x_1 (i)$ for $1\le i \le N-1$, and $x_2 (N)=0$), $L_s^T x_3 = x_2$ and  $T_N x = x_3$ (by using the accurate $LDL^T$ factorization of $T_N$).
Since ${\rm Ker}(B_h) = \tspan \{v_0\}$, the general solution to $B_h x =y$ is $x(t) = x+t v_0$.
%A general solution can be obtained from the same process  by setting  $x_2 (N)=t$ (which still satisfies $D_s x_2 = x_1$) and has the form $x(t) = x+T_N^{-1} L_s^{-T} (te_N) = x+ tT_N^{-1} e = x+t v_0$.
Now,  $x(t)$  is in ${\cal R} (P_0)$  if and only if $t=-e^T x / e^T v_0$.
%in which case $x(t) = P_0 x$.
Thus, $x(t) = x- v_0 \frac{e^T x}{e^T v_0}=P_0 x $ is the unique  solution to $B_h x =y$ that is in ${\cal R} (P_0)$.
%  can be obtained by  computing the particular solution $x$ first and then $P_0 x = x - v_0 e^T x / e^T v_0$.
We summarize this  as the following algorithm.

\begin{algorithm}\label{alg:bhinv}
{\sc Compute $x =\left( B_h|_{{\cal R} (P_0)}\right)^{-1} y$} (solve $B_h x =y$ for $x, y \in {\cal R} (P_0)$)
\begin{tabbing}
123\=123\= 1236\= 12325\= 3333\=3333\= \kill
%{\bf Algorithm 1:} $LDU$ {\sc factorization of $\D (A_D, v)$}.\\
\>1 \> Input: $y \in {\cal R} (P_0)$; \\
\>2 \> Compute an accurate $LDL^T$ factorization $T_N = L_t D_t L_t^T$ by Algorithm \ref{alg:lu}; \\
\>3 \> Solve $T_N v_0 = e$;\\
\>4 \>Solve $L_s x_1 =  y$; $x_2=x_1$ and explicitly set $x_2 (N)=0$; ($L_s$ defined by (\ref{eq:ls}))\\
\>5 \>Solve $L_s^T x_3 = x_2$;\\
\>6 \>Solve $T_N x = x_3$ using $T_N = L_t D_t L_t^T$; \\
\>7 \> $x = x - v_0 e^T x / e^T v_0$.
\end{tabbing}
\end{algorithm}

{\sc Remarks:} Lines 2 and 3 only need to be computed once if we run the above algorithm multiple times for different $y$. In Line 4, although $x_1 (N)=0$ in theory, it may not be the case due to roundoffs. We therefore explicitly set $x_2 (N)=0$.

%With the above algorithm, $\left( B_h|_{{\cal R} (P_0)}\right)^{-1}$ is computed accurately. That ensures that  the a few smallest eigenvalues of $B_h$ are computed accurately.
We now present some  numerical results.

\noindent {\bf Example 4: } \label{e3}
Consider the biharmonic eigenvalue problem (\ref{bihar1d}) with the Dirichlet boundary condition
(\ref{bihar1dbd}). The eigenvalues are not known exactly for this problem, but solving it as an initial value problem, we can reduce it to an algebraic equation transcendental in the eigenvalue parameter \cite{embry}. The root of the transcendental can then be computed using Mathematica's
build-in root-finding routine in high precision. The following is the computed result using  50 digits as obtained by M. Embree  \cite{embry}:
\[
\lambda_1 \approx 500.56390174043259597023906145469523385520808092739.
\]
We now compute $\lambda_1$ using the  difference schemes $A_h$ of (\ref{bihar1dstandard}) and $B_h$ of (\ref{eq:bh}). The eigenvalues of $A_h$ and $B_h$ are computed using  inverse iteration\footnote{
%We present the results of the inverse iteration to focus on the roundoff behavior of the discretization.
We have also carried out tests using the Lanczos algorithm with similar results.} with $A_h^{-1}$ computed using the Cholesky factorization % {\tt chol} of MATLAB
and $\left( B_h|_{{\cal R} (P_0)}\right)^{-1}$ computed by Algorithm \ref{alg:bhinv}.  We list the computed results for mesh size $h=2^{-k}$ with $k=4,5, \cdots, 19$ in Table  \ref{tbl:ex3}. Also listed are the relative errors as computed using the $\lambda_1$ above.

\begin{table}[hbt] \small
\begin{center}
\caption{\small {Example 4}: approximations of
$\lambda_1$:
$\mu_1^{chol}$ - $A_h$ with Cholesky;
$\mu_1^{aldu}$ - $B_h$ with accurate $LDL^T$.
}
\begin{tabular}{c|c|c|c|c}
$h$ &
$\mu_1^{chol}$ of $A_h$ & $\frac{|\lambda_{1}-\mu_1^{chol}|}{\lambda_{1}}$
& $\mu_1^{aldu}$ of $B_h$ & $\frac{|\lambda_{1}-\mu_1^{aldu}|}{\lambda_{1}}$ \\
\hline
 6.3e-2       &    4.84875068679297440e2      &    3.1e-2      &   5.02539119245910290e2       &    3.9e-3 \\
\hline
 3.1e-2       &    4.96560468599189620e2      &    8.0e-3      &   5.01071514661422610e2       &    1.0e-3 \\
\hline
 1.6e-2       &    4.99557827787957420e2      &    2.0e-3      &   5.00691660365858750e2       &    2.6e-4 \\
\hline
 7.8e-3       &    5.00312055034394060e2      &    5.0e-4      &   5.00595894739436520e2       &    6.4e-5 \\
\hline
 3.9e-3       &    5.00500919775768520e2      &    1.3e-4      &   5.00571903322230300e2       &    1.6e-5 \\
\hline
 2.0e-3       &    5.00548153582656820e2      &    3.1e-5      &   5.00565902344106350e2       &    4.0e-6 \\
\hline
 9.8e-4       &    5.00559945766432860e2      &    7.9e-6      &   5.00564401904366210e2       &    1.0e-6 \\
\hline
 4.9e-4       &    5.00562428167766880e2      &    2.9e-6      &   5.00564026782234690e2       &    2.5e-7 \\
\hline
 2.4e-4       &    5.00570856305839580e2      &    1.4e-5      &   5.00563933000933900e2       &    6.2e-8 \\
\hline
 1.2e-4       &    5.00646588821035950e2      &    1.7e-4      &   5.00563909555575040e2       &    1.6e-8 \\
\hline
 6.1e-5       &    5.00733625655273670e2      &    3.4e-4      &   5.00563903694203870e2       &    3.9e-9 \\
\hline
 3.1e-5       &    5.48097497225735650e2      &    9.5e-2      &   5.00563902228892570e2       &    9.8e-10 \\
\hline
 1.5e-5       &    7.35072209632324980e2      &    4.7e-1      &   5.00563901862573060e2       &    2.4e-10 \\
\hline
 7.6e-6       &    1.98724756006599050e3      &    3.0e0      &   5.00563901770967450e2       &    6.1e-11 \\
\hline
 3.8e-6       &    2.91400428172778860e3      &    4.8e0      &   5.00563901748025440e2       &    1.5e-11 \\
\hline
 1.9e-6       &   -   &  -    &   5.00563901742273290e2       &    3.7e-12
\end{tabular}

\label{tbl:ex3}
\end{center}
\end{table}

We note that both discretizations converge in the order of $h^2$. For the standard discretization $A_h$, the eigenvalue errors decreases for $h$  up to $10^{-3}$ and starts to increase afterwards. When $h\approx 10^{-6}$, there is no accuracy left in the computed eigenvalue. For $h=1.9e-6$, the matrix was tested as being indefinite by the Cholesky factorization algorithm (marked by the ``-" entry in the table). For the new discretization $B_h$, the order $h^2$ convergence is maintained for $h$ up to when the eigenvalue reach full machine accuracy with the relative error in the order of $n \bu \approx 10^{-11}$.
It is also interesting to note that, even before the roundoff errors overtake the discretization errors in the standard scheme $A_h$,  the new discretization $B_h$  results in  an eigenvalue error one order of magnitude smaller.

%\begin{figure}[ht]
%\centering
%    \includegraphics[height=2.4 in, width = 3 in]{nsfex2}
%    %\includegraphics[height=3.8 in, width = 4.8 in]{m5}
%    \caption{Example 3: $\sigma=1$  (dotted), $\sigma=0.1$ (dash-dotted), $\sigma=0.01$ (solid), $\sigma=0.001$ (dashed), and $\sigma=0$ ($+$)  }
%    %\label command can not be placed in front of \caption,otherwise section # will show up when you refer the figure.
%\end{figure}

Thus, we have accurately computed the smallest eigenvalues of a 1-dimensional biharmonic operator with the Dirichlet boundary condition by deriving a product form discretization. Our method can be applied to fourth order operator such as
\[
{\cal L} v := {d^2  \over dx^2} \left( p(x) {d^2 v \over dx^2}\right)
\;\;\mbox{ or }\;\;
{\cal L} v :={d^4 v \over dx^4} +p(x)  {d^2 v \over dx^2}.
\]
However, we have not been able to generalize it to problems of dimension higher than 1.
This appears to be a difficulty intrinsic to the biharmonic operator in 2-dimension or higher. It is known that  the   biharmonic operator with the Dirichlet boundary condition do not have a so-called {\em positivity preserving property} in almost all kinds of  domains in dimension 2 or higher; see \cite{codu50,gara51,grro10} for details. The only exception is when the domain is a ball in $\R^n$ \cite{bogg05}, which includes $[0, 1]$ of  $\R$.  Notice that the two factors of $B_h$ are diagonally dominant M-matrices, from which it follows that $B_h$  has the positivity preserving property. With the operator in 2 dimension not having the positivity preserving property, it appears difficult to derive a discretization of the form of $B_h$.

%\section{Numerical Examples}

\section{Concluding Remarks}
We have presented a new method  to compute a few smallest eigenvalues of large and extremely ill-conditioned matrices that are diagonally dominant or are products of diagonally dominant matrices. This can be used to compute eigenvalues of finite difference discretization of certain differential operators. In particular, the eigenvalues of the 1-dimensional biharmonic operator is accurately computed by deriving a  new discretization that can be written as a product of  diagonally dominant matrices. Unfortunately, it appears difficult to apply our present techniques to the  2-dimensional problems.
For the future works, it will be interesting to further investigate if there is a suitable generalization  to the 2-dimensional biharmonic problems. It will also be interesting to study if the present techniques can be used for other discretization methods  as well as adaptive techniques \cite{daxu08,mehr11} for differential operators.

\bigskip

{\bf Acknowledgement}: I would like to thank Professors Russell Carden, Mark Embree, and Jinchao Xu  for their interests and  discussions. In particular, Mark Embree provided me with his computed eigenvalue in high precision  in Example 4 and Russell Carden pointed me to the reference \cite{grro10}. I would also like to thank two anonymous referees for their many constructive comments that have improved the presentation of the paper.

%
%\bibliography{ref}{}
%\bibliographystyle{plain}
%\end{document}

\end{document}